\newcommand\msmall[1]{\mbox{\small\ensuremath{#1}}}
\titlespacing{\section}{0pt}{1ex}{1ex}
\title{Logarithmic Voronoi polytopes for discrete linear models}
\author{Yulia Alexandr}
\affil[]{University of California, Berkeley}
\date{}
\begin{document}
\maketitle
\begin{abstract}
We study logarithmic Voronoi cells for linear statistical models and partial linear models. The logarithmic Voronoi cells at points on such model are polytopes. To any $d$-dimensional linear model inside the probability simplex $\Delta_{n-1}$, we can associate an $n\times d$ matrix $B$. For interior points, we describe the vertices of these polytopes in terms of cocircuits of $B$. We also show that these polytopes are combinatorially isomorphic to the dual of a vector configuration with Gale diagram $B$. This means that logarithmic Voronoi cells at all interior points on a linear model have the same combinatorial type. We also describe logarithmic Voronoi cells at points on the boundary of the simplex. Finally, we study logarithmic Voronoi cells of partial linear models, where the points on the boundary of the model are especially~of~interest.
\end{abstract}

\section{Introduction}\label{intro}
Given $X\subset \RR^n$, the \textit{Voronoi cell} at a point $p\in X$ is defined to be the set of all points in $\RR^n$ that are closer to $p$ than to any other point in $X$ with respect to the Euclidean metric. When $X$ is a finite set, Voronoi cells at all points in $X$ tessellate $\RR^n$ into convex polyhedra. When $X$ is a variety, the Voronoi cells of $X$ are convex semialgebraic sets in the normal space of $X$, and their algebraic boundary was computed in \cite{Voronoi-cells-of-varieties}. One can study the properties of real algebraic varieties, such as their Voronoi cells, that depend on the distance metric. This is the objective of metric algebraic geometry \cite{DiRoccoEklundWeinstein2020BottleneckDegreeOfAlgebraicVarieties, SturmfelsEuclideanDistanceDegree2016, maddiesthesis}.

Logarithmic Voronoi cells arise naturally in this context when we replace the Euclidean distance with Kullback-Leibler divergence of probability distributions \cite[Section 2.7]{AyJostLeSchwachhofer2017InformationGeometryTEXT}. Given a statistical model $\M$ inside the probability simplex $\Delta_{n-1}$, the \textit{maximum likelihood estimator} (MLE) maps an empirical distribution $u\in\Delta_{n-1}$ to the point $p$ on the model that best explains the data: $p$ maximizes the log-likelihood function $\ell_u(x)=\sum_{i=1}^nu_i\log(x_i)$. Thus, the logarithmic Voronoi cell at $p\in \M$ is the set of all data points $u\in\Delta_{n-1}$ for which $\ell_u(x)$ is maximized at $p$, or equivalently, at which Kullback-Leibler divergence from $u$ to the model is minimized. Logarithmic Voronoi cells, first introduced in \cite{AH20logarithmic}, are convex sets that tessellate the probability simplex. As such, they lie at the interface of geometric combinatorics and algebraic statistics.

Discrete linear models are given by the intersection of an affine linear space with the probability simplex. These models are important in statistics and particle physics \cite{modulispaces}, and their logarithmic Voronoi cells are polytopes. After giving the basic definitions in Section \ref{prelim}, we describe these polytopes combinatorially in Section \ref{combinatorialtype}. Proposition \ref{vertices-are-cocircuits} gives a formula for the vertices of logarithmic Voronoi cells at interior points. We show that logarithmic Voronoi cells at all interior points on a linear model have the same combinatorial type and describe this combinatorial type using Gale diagrams and polar duals in Theorem \ref{Gale-diagrams}. 

In Section \ref{boundary}, we focus on the points on a linear model that lie on the boundary of the simplex. Theorem \ref{linear-boundary} gives a combinatorial description of logarithmic Voronoi cells at such points for linear models in general position. In particular, the moduli spaces $\M_{0,m}$, studied in \cite{modulispaces}, can be viewed as $(m-3)$-dimensional linear models inside the simplex $\Delta_{n-1}$ where $n=m(m - 3)/2$. We compute logarithmic Voronoi cells at the boundary points of $\M_{0,6}$ in Example \ref{moduli}. Finally, Section \ref{partial} investigates logarithmic Voronoi cells of partial linear models. A partial linear model is given by a polytope, but such that not all facets of the polytope lie on the boundary of the simplex. Every partial linear model can be extended to a linear model. In Theorem \ref{partial-interior}, we show that for a partial linear model, logarithmic Voronoi cells at points in the interior of the polytope agree with logarithmic Voronoi cells of its linear extension. Theorem \ref{partial-facets} describes logarithmic Voronoi cells at the points in the relative interior of the facets of the model.

\section{Definitions}\label{prelim}
In this paper we study partitions of the probability simplex
$$\Delta_{n-1}=\left\{u\in\RR^n: \sum_{i=1}^nu_i=1, u_i\geq 0\text{ for all }i\in[n]\right\}$$
into combinatorially equivalent polytopes. These partitions are induced by linear statistical models. In Figure \ref{figure:tetrahedra-examples-d=1}, such models are given by the dotted line segments.

A \textit{statistical model} $\M$ is defined to be any subset of $\Delta_{n-1}$. When $\M$ is the intersection of an algebraic variety with the simplex, $\M$ is said to be \textit{algebraic}. When the variety is a linear space, we say the model is \textit{linear}.  We often work with the open probability simplex, denoted by $\Delta_{n-1}^{\circ}$, which is defined to be the interior of $\Delta_{n-1}$. For a point $u\in\Delta_{n-1}^\circ$, we define the \textit{log-likelihood function} $\ell_u:\RR_{>0}^n\to \RR$ as $\ell_u(x)=\sum_{i=1}^nu_i\log x_i$ \cite{solv-lik-eq}. For any model $\M\subseteq\Delta_{n-1}$, define the relation $\Phi$ on $\M\times\Delta_{n-1}$ by
$$(u,p)\in\Phi\iff p\in\argmax_{q\in\M}\ell_u(q).$$
This relation is known as the \textit{likelihood correspondence} \cite[Definition 1.5]{Huh2014}.
If $(u,p)\in\Phi$, we write $\Phi(u)=p$ and say that $p$ is the \textit{maximum likelihood estimate} (MLE) of $u$ in $\M$. For arbitrary models, describing the set of those points $u$ for which MLE exists is an active area of research, and we refer the reader to \cite{AmendolaKohnReichenbachSeigal2020InvariantTheoryScalingAlgorithmsForMaximumLikelihoodEstimation, ErikssonFienbergRinaldoSullivant2006PolyhedralConditionsForNonexistenceOfMLE, Fienberg1970Quasi-independenceMaximumLikelihoodEstimationIncompleteContingencyTables, GrossRodriguez2014MaximumLikelihoodPresenceDataZeros}. Given a point $u\in\Delta_{n-1}$, the log-likelihood function $\ell_u(x)$ is strictly concave on the simplex and hence on any convex subset of the simplex. Both linear and partial linear models are given by polytopes, so the MLE will always exist and be unique for every $u\in\Delta_{n-1}^\circ$.

\begin{figure}[H]
    \centering
    \includegraphics[width=0.45\textwidth]{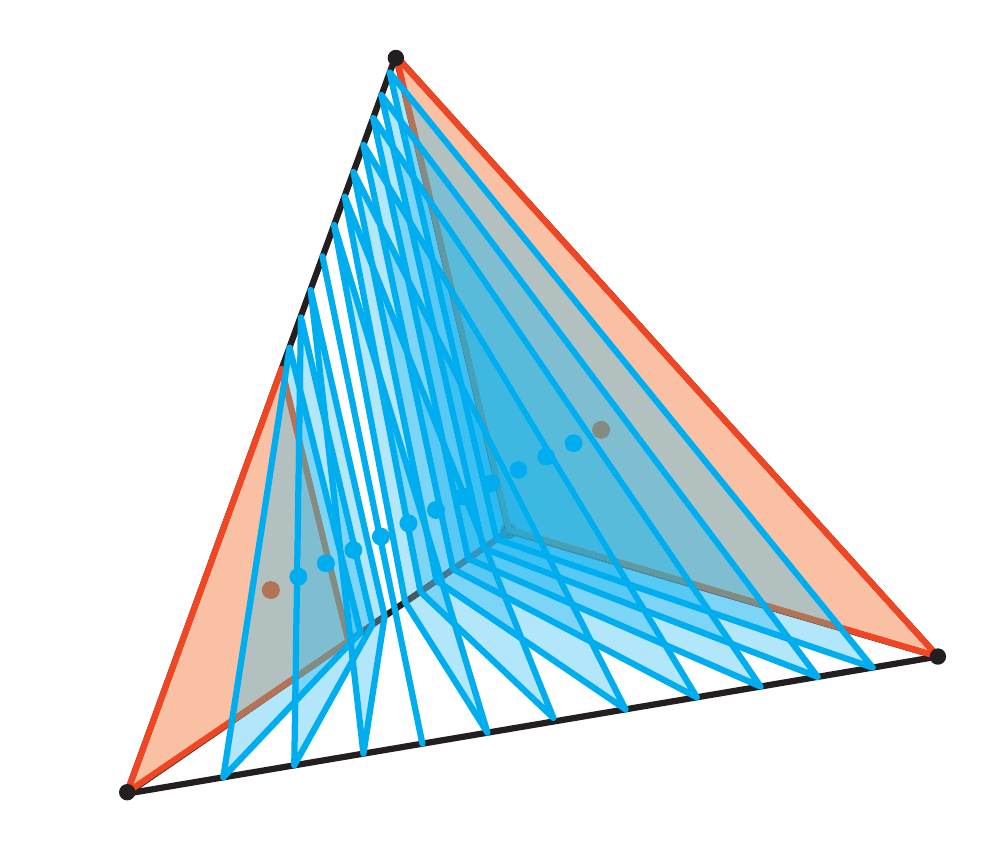} \includegraphics[width=0.42\textwidth]{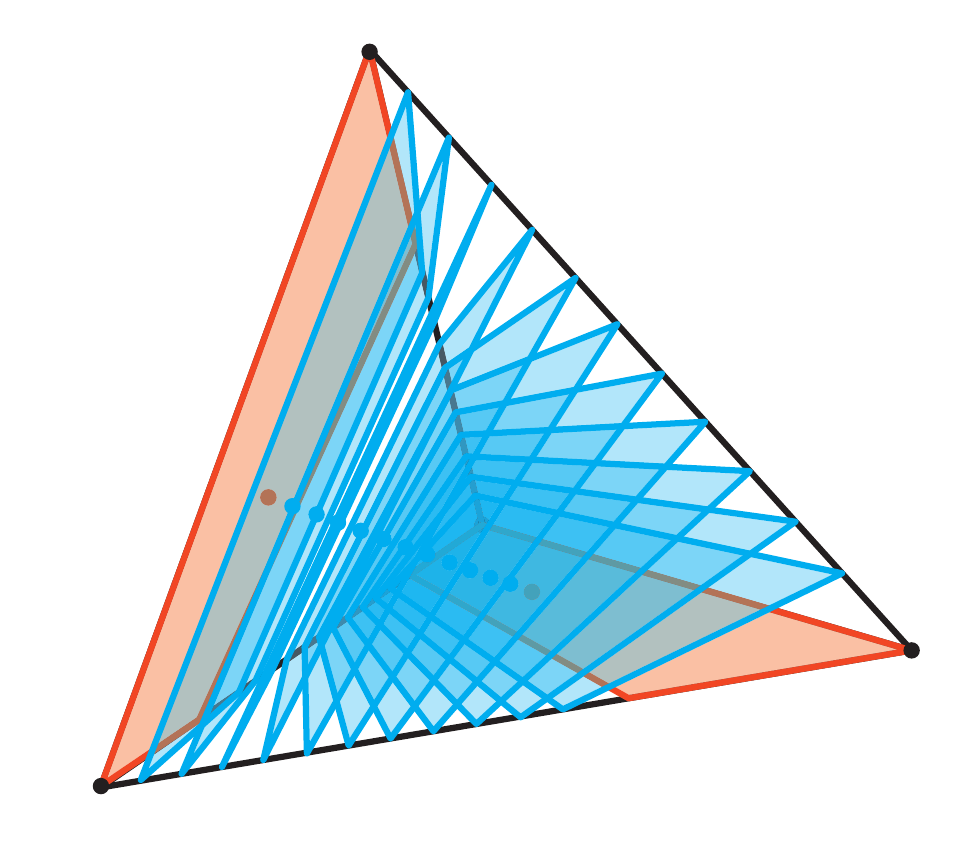} 
    \caption{Partition of the tetrahedron $\Delta_{3}$ into triangles (left) and quadrilaterals (right).}
    \label{figure:tetrahedra-examples-d=1}
\end{figure}

\begin{definition}
Given a model $\M$ and a point $p\in\M$, the \textit{logarithmic Voronoi cell} at $p$ is defined to be the set of all $u\in\Delta_{n-1}$ such that $\Phi(u)=p$.
\end{definition} 

In general, logarithmic Voronoi cells are convex sets. For some models, logarithmic Voronoi cells are known to be polytopes; for example, finite models, models of ML degree one, toric models, and linear models \cite{AH20logarithmic}. For such models, we will refer to their logarithmic Voronoi cells as \textit{logarithmic Voronoi polytopes}. For discrete linear models, these polytopes partition the probability simplex.

\begin{definition}\label{defn-linear-model}
Let $f_1,\ldots, f_n$ be linear polynomials in $\theta=(\theta_1,\ldots,\theta_d)$ with $\sum_{i=1}^nf_i(\theta)=1$. Let $\Theta\subseteq \RR^d$ be the $d$-dimensional parameter space where $f_i(\theta)>0$ for all $\theta\in\Theta$ and $i=1,\ldots,n$. The image of the map
$$f:\Theta\to \Delta_{n-1}: \theta\mapsto (f_1(\theta),\ldots,f_n(\theta))$$
is called a \textit{discrete linear model} \cite[Section 1.2]{PachterSturmfels},\cite[Section 7.2]{Sullivant2018AlgebraicStatistics}.
\end{definition} 
Note that a linear model is a polytope, obtained by intersecting an affine linear space with a probability simplex. The dimension of a linear model is the dimension of the corresponding linear space.

Fix a linear model $\M$ as in the definition. For any point $p\in\M$, we denote the logarithmic Voronoi polytope at $p$ by $\log\Vor_{\M}(p)$. We call a point $p=(p_1,\cdots,p_n)\in\M$ \textit{interior} if $p_i > 0$ for all $i\in[n]$. Our hypotheses in Definition \ref{defn-linear-model} imply that any linear model can be written as
$$\M=\{c-Bx: x\in\Theta\}$$
where $B$ is a $n\times d$ matrix, each of whose columns sums to 0, and $c\in\RR^n$ is a vector, whose coordinates sum to 1.

\section{Combinatorial types of logarithmic Voronoi polytopes}\label{combinatorialtype}

In this section, we give an explicit combinatorial description of logarithmic Voronoi polytopes at interior points on the linear model $\M$. We find that these polytopes have the same combinatorial type. The next proposition gives a formula for the vertices of those~polytopes. Slightly abusing the notation in \cite[Chapter 6]{ziegler2012lectures}, we define a \textit{cocircuit} of the matrix $B$ to be any vector $v\in\RR^n$ of minimal support such that $vB = 0$.

\begin{prop}\label{vertices-are-cocircuits}
For any interior point $p\in\M$, the vertices of $\log\Vor_{\M}(p)$ are of the form $v\cdot\diag(p)$ where $v$ is any positive cocircuit of $B$ such that $\sum_{i=1}^nv_ip_i=1$.
\end{prop}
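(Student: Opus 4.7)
The plan is to use first-order optimality together with an invertible diagonal change of coordinates to identify $\log\Vor_\M(p)$ with a bounded affine slice of a polyhedral cone, and then read off the vertices from the extreme rays of that cone.

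Since $p$ is an interior point of $\Delta_{n-1}$ and $\ell_u$ is concave on $\M \cap \RR^n_{>0}$, the relation $\Phi(u)=p$ is equivalent to $p$ being a critical point of $\ell_u$ restricted to $\M$. Using the parametrization $\M=\{c-Bx\}$, the tangent space to $\M$ at every point is the column span of $B$, so this critical-point condition reads $B^\top \nabla\ell_u(p)=0$. Since $\nabla\ell_u(p)=(u_1/p_1,\ldots,u_n/p_n)$, it becomes $B^\top \diag(p)^{-1}u=0$. Introducing the change of variable $v:=\diag(p)^{-1}u$, so that $u=v\cdot\diag(p)$ coordinatewise, the normality condition together with the membership $u\in\Delta_{n-1}$ translates to $v\in\ker B^\top$, $v\geq 0$, and $p^\top v=1$. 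Because $\diag(p)$ is invertible, this identifies
\[
\log\Vor_\M(p) \;=\; \diag(p)\cdot P, \qquad P:=\{\,v\in\RR^n : B^\top v = 0,\ v\geq 0,\ p^\top v = 1\,\}.
\]

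The polytope $P$ is the slice at level $1$ of the pointed polyhedral cone $K:=\ker B^\top \cap \RR^n_{\geq 0}$ under the functional $v\mapsto p^\top v$. Since $p>0$, this functional is strictly positive on $K\setminus\{0\}$, so $P$ is bounded; it is also nonempty, because the hypothesis that the columns of $B$ sum to $0$ places the all-ones vector in $K$. Consequently, the vertices of $P$ are exactly the unique intersections of the hyperplane $\{p^\top v=1\}$ with the extreme rays of $K$. The extreme rays of $K = \{v\geq 0:B^\top v=0\}$ are, by the standard oriented-matroid characterization recalled in Ziegler, Chapter 6, precisely the rays spanned by the positive co-circuits of $B$, i.e.\ the nonzero nonnegative elements of $\ker B^\top$ whose support is inclusion-minimal. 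Rescaling each such co-circuit $v$ to satisfy $p^\top v=1$ and applying $\diag(p)$ yields the claimed vertex, and this correspondence is a bijection.

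The only nontrivial ingredient is the matroid-theoretic identification of the extreme rays of $K$ with the positive co-circuits of $B$; everything else reduces to the first-order optimality condition for concave maximization and a diagonal change of coordinates. A minor care point worth checking is that the concavity-based equivalence between $\Phi(u)=p$ and the first-order condition does not require strict concavity at $u$ on the simplex boundary, since $p$ lies in the interior and we are characterizing the fiber $\Phi^{-1}(p)$ as a set rather than asserting uniqueness of the maximizer.
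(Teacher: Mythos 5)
Your proposal is correct and follows essentially the same route as the paper: derive the likelihood equations $B^\top\diag(p)^{-1}u=0$ from first-order optimality, rescale by $\diag(p)$, and identify the vertices with the minimal-support nonnegative vectors in $\ker B^\top$, i.e.\ the positive co-circuits normalized by $p^\top v=1$. The only cosmetic difference is that the paper phrases the last step via basic feasible solutions of a linear program while you slice the pointed cone $\ker B^\top\cap\RR^n_{\geq 0}$ along its extreme rays; these are the same argument.
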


\begin{proof}
The log-likelihood function of a point $u\in\Delta_{n-1}$ is
$$\ell_u(x)=\sum_{i=1}^nu_i\log(c_i-b_i\cdot x)$$
where $b_1,\ldots,b_n$ are the rows of $B$. The likelihood equations \cite[Chapter 2]{DrtonSturmfelsSullivant2009LecturesOnAlgebraicStatistics} have the form
$$\frac{u_1}{p_1}\cdot b_{1j}+\cdots+\frac{u_n}{p_n}\cdot b_{nj}=0\hspace{2em}\text{ for each $j\in[d]$}.$$
Since the log-likelihood function $\ell_u$ is strictly concave, it has a unique critical point on the model. Thus, the points in the logarithmic Voronoi polytope at $p$ are the distributions $u$ on which likelihood equations vanish. Equivalently, $\log\Vor_{\M}(p)$ is the set of all distributions that satisfy the linear equations $\left(u_1/p_1,u_2/p_2,\ldots, u_n/p_n\right)\cdot B=0$. Hence, we may write:
\begin{align*}
  \log\Vor_{\M}(p)&=\{u\in\Delta_{n-1}: u\cdot\text{diag}(p)^{-1}B=0\}\\
  &=\left\{r\cdot\text{diag}(p)\in\RR^n: rB=0,\; r\geq 0,\; \sum_{i=1}^nr_ip_i=1\right\}.
\end{align*}
Now consider an $n\times (d+1)$ matrix $M$ obtained from $B$ by adjoining $(p_1,\ldots,p_n)^T$ as the first column. Then the logarithmic Voronoi polytope at $p$ can be identified with the feasible region of a linear program, namely $r^TM=(1,0,\ldots,0), r\geq 0$. From the simplex method \cite[Chapter 3]{simplex-method-ref}, we know that the vertices of such polytope are the basic feasible solutions, i.e. minimal support vectors in the region. Those basic solutions are precisely the positive cocircuits $v$ of $B$ for which $\sum_{i=1}^nv_ip_i=1$. Since $p$ is interior, $v\cdot\diag(p)$ has the same support as $v$. Thus the vertices of the logarithmic Voronoi polytope at $p$ are precisely the points $v\cdot\diag(p)$ where $v$ is a positive cocircuit of $B$ for which $\sum_{i=1}^nv_ip_i=1$.
\end{proof}

Now we describe logarithmic Voronoi cells at interior points combinatorially. We use the formalism of Gale diagrams, as described in \cite[Chapter 6]{ziegler2012lectures}. For two polytopes $P_1, P_2$, we will write $P_1\sim P_2$ to mean that $P_1$ and $P_2$ are combinatorially equivalent. We will denote the polar dual of a polytope $P$ by $P^{\Delta}$.

Given our linear model $\M=\{c-Bx: x\in\Theta\}$, note that the configuration ${b_1,\ldots,b_n}$ of row vectors of $B$ is totally cyclic, i.e. $\sum_{i=1}^nb_i=0$, since each column of $B$ sums to 0. Hence, $B$ is a Gale transform of some affine configuration
of $n$ vectors $\{v_1, \ldots, v_n\}$ in $\RR^{n-d-1}$ \cite[Section 6.4]{ziegler2012lectures}. Since a Gale transform uniquely
determines the configuration up to an affine transformation, we may assume that $0 \in
\conv\{v_1, \ldots, v_n\}$. Note that this configuration is not necessarily in convex position; however, its
dual is a polytope. This polytope will have the same combinatorial type as the logarithmic Voronoi
cells at interior points of $\M$, as shown in the next theorem.

\begin{thm}\label{Gale-diagrams}
For any interior point $p$ of the linear model $\M$, the logarithmic Voronoi polytope at $p$ is combinatorially equivalent to the dual of the polytope obtained by taking the convex hull of a vector configuration with Gale diagram $B$.
\end{thm}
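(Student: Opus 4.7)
The plan is to identify $\log\Vor_{\M}(p)$ with a cross-section of the cone of positive dependencies of $B$, and then apply the Gale correspondence to match its face lattice with that of $\conv(V)^\Delta$.

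First, the computation in the proof of Proposition \ref{vertices-are-cocircuits} shows that the linear map $r \mapsto r\cdot\diag(p)$ carries the polytope
$$Q(p) := \left\{r \in \RR^n_{\geq 0} : rB = 0,\ \sum\nolimits_{i=1}^n r_i p_i = 1\right\}$$
onto $\log\Vor_{\M}(p)$. Since $p$ is interior, $\diag(p)$ is invertible, so this is a linear isomorphism and preserves combinatorial type. Now $Q(p)$ is the cross-section by the hyperplane $\sum r_i p_i = 1$ of the pointed polyhedral cone $K := \{r \in \RR^n_{\geq 0} : rB = 0\}$. Since $p$ is strictly positive and the extreme rays of $K$ are spanned by the positive co-circuits of $B$ (Proposition \ref{vertices-are-cocircuits}), this hyperplane meets every extreme ray of $K$, so the face lattice of $Q(p)$ is isomorphic to the face lattice of $K$.

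Next, I would parametrize faces of $K$ by subsets $S \subseteq [n]$: the face $F_S := \{r \in K : \mathrm{supp}(r) \subseteq S\}$ has relative interior $\{r \in K : \mathrm{supp}(r) = S\}$, which is nonempty precisely when $0 \in \mathrm{relint}(\conv\{b_i : i \in S\})$, where $b_1, \dots, b_n$ are the rows of $B$. Setting $\mathcal{L} := \{S \subseteq [n] : 0 \in \mathrm{relint}(\conv\{b_i : i \in S\})\}$, the face lattice of $K$ above its apex is $\mathcal{L}$ ordered by inclusion.

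Finally, by the Gale correspondence \cite[Theorem 6.19]{ziegler2012lectures}, the proper nonempty faces of $\conv(V)$ correspond to subsets $T \subsetneq [n]$ satisfying $0 \in \mathrm{relint}(\conv\{b_i : i \notin T\})$, with inclusion of vertex sets matching inclusion of $T$. Substituting $S = [n] \setminus T$ rewrites this as the poset $\mathcal{L}$ with inclusion reversed; passing to the polar dual $\conv(V)^\Delta$ reverses the order a second time, recovering $\mathcal{L}$ under inclusion. Chaining the isomorphisms gives $\log\Vor_{\M}(p) \sim Q(p) \sim \conv(V)^\Delta$. The most delicate point will be the bookkeeping of the two order reversals (Gale complement and polar duality) and handling the top and bottom elements of the lattices, which follow from the totally cyclic condition $\sum b_i = 0$ forcing $[n] \in \mathcal{L}$.
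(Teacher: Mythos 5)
Your argument is correct, but it takes a genuinely different route from the paper's. Both proofs start the same way, identifying $\log\Vor_{\M}(p)$ with the slice $Q$ of the cone $K=\{r\in\RR^n_{\geq 0}: rB=0\}$ and noting that $\diag(p)$ preserves the combinatorial type. From there the paper stays at the level of linear algebra: writing $A$ for the matrix whose rows are the all-ones vector and the coordinates of the $v_i$, it uses the relation between the kernel of $B^T$ and the row space of $A$ to rewrite the cone over $Q$ as $\{x: xA\geq 0\}$, and then checks in one line that the cone over $P^{\Delta}=\{z: V^Tz\leq 1\}$ is that same cone --- no face lattices or order reversals appear. You instead run the combinatorial Gale machinery: faces of $K$ indexed by supports $S$ with $0\in\mathrm{relint}\,\conv\{b_i: i\in S\}$, the coface criterion for faces of $\conv(V)$, and two order reversals (complementation and polarity) that cancel. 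Your route makes the vertex--facet dictionary explicit --- minimal supports, i.e.\ positive co-circuits, correspond to facets of $\conv(V)$ and hence to vertices of $P^\Delta$, matching Proposition~\ref{vertices-are-cocircuits} --- but it carries more bookkeeping debt: since the $v_i$ need not be in convex position, you must use the version of the Gale correspondence for point configurations, in which a geometric face $F$ corresponds to the full index set $\{i: v_i\in F\}$ so that the coface map is a bijection onto your poset; you must add the top and bottom lattice elements by hand, as you note; and polarity is an anti-isomorphism of face lattices only when $0$ lies in the interior of $\conv(V)$, a normalization you should state explicitly (it is available because the configuration affinely spans). The paper's cone comparison sidesteps all of these issues, at the cost of being less transparent about which face goes where.
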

\begin{proof}
As discussed above, let $\{v_1,\ldots,v_n\}$ be a vector configuration whose Gale diagram is $B$. Let $P=\conv\{v_1,\ldots,v_n\}$ and assume $0\in P$. We wish to show that $\log\Vor_\M(p)\sim P^{\Delta}$. 
Define
$$Q:=\left\{r\in\RR^n: rB=0,\; r\geq 0,\; \sum_{i=1}^nr_i=1\right\}.$$
Then $\log\Vor_\M(p)\sim Q$, since multiplication by $\text{diag}(p)$ is an affine transformation and does not change the combinatorial type of $\log\Vor_{\M}(p)$. 
It then suffices to show $Q\sim P^{\Delta}$. Let $V$ be the matrix whose column vectors are $v_1,\ldots,v_n$, and let
$$A=\begin{bmatrix}1&1&\cdots&1\\
v_1&v_2&\cdots&v_n\end{bmatrix}.$$ We may assume that the rows of $A$ are linearly independent. Since the Gale diagram of $B^T$ is $A^T$, we know that $\ker B^T=\im A^T$. 

We will show $Q\sim P^{\Delta}$. Observe
\begin{align*}
    Q=\left\{r\in\RR^n:rB=0,\; r\geq0,\; \sum_{i=1}^nr_i=1\right\}
    &=\left\{xA\in\RR^{n}: xA\geq0,\; \sum_{i=1}^n(xA)_i=1\right\}\\
    &\sim\left\{x\in\RR^{n-d}: xA\geq0,\; \sum_{i=1}^n(xA)_i=1\right\}.
\end{align*}
The last equivalence follows from the fact that the rows of $A^T$ are linearly independent. Therefore, the cone over $Q$ is $C(Q)=\left\{x\in\RR^{n-d}: xA\geq0\right\}.$

For the dual of the polytope $P$, we may write:
\begin{align*}
    P^{\Delta}=\{z\in\RR^{n-d-1}: V^Tz\leq 1\}
    &=\{z\in\RR^{n-d-1}:(1,-z)A\geq 0\}\\
    &\sim \{x\in\RR^{n-d}: x_1=1,\; xA\geq 0\}.
\end{align*}
Hence the cone over $P^{\Delta}$ is also $\left\{x\in\RR^{n-d}: xA\geq0\right\}.$ Note that this is a pointed cone at the origin, and both polytopes $Q$ and $P^{\Delta}$ are obtained by intersecting this cone with a hyperplane that doesn't contain the origin. Hence, all the extreme rays are intersected by both hyperplanes. It follows that $Q$ and $P^{\Delta}$ have the same combinatorial type by \cite[Proposition 2.4]{ziegler2012lectures}. Therefore, $\log\Vor_{\M}(p)$ is indeed combinatorially equivalent to $P^{\Delta}$.
\end{proof}

\begin{cor}
The Logarithmic Voronoi polytopes at all interior points in a linear model have the same combinatorial type.
\end{cor}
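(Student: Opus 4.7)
The plan is to observe that the corollary is essentially immediate from Theorem \ref{Gale-diagrams}, so the entire task is to isolate the one-line dependence argument hiding in that theorem. First I would note that the polytope $P = \conv\{v_1,\ldots,v_n\}$ appearing in the theorem is determined entirely by the Gale diagram $B$ (up to affine equivalence), and in particular it does not depend on the choice of interior point $p \in \M$. Hence its polar dual $P^\Delta$ is a fixed polytope associated to the linear model $\M$ itself.

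Next I would apply Theorem \ref{Gale-diagrams} to two arbitrary interior points $p, p' \in \M$: this yields $\log\Vor_{\M}(p) \sim P^\Delta$ and $\log\Vor_{\M}(p') \sim P^\Delta$, where $P$ is the same polytope in both cases. Since combinatorial equivalence $\sim$ is transitive, I conclude $\log\Vor_{\M}(p) \sim \log\Vor_{\M}(p')$, which is exactly the claim.

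There is no real obstacle here; the only thing one might want to double-check is that the vector configuration $\{v_1,\ldots,v_n\}$ constructed in the paragraph preceding Theorem \ref{Gale-diagrams} is canonically determined by $B$ up to the affine transformations that preserve combinatorial type of the dual polytope, so that the combinatorial type of $P^\Delta$ is a genuine invariant of $\M$ rather than of additional choices. This is standard from the Gale duality formalism in \cite[Chapter 6]{ziegler2012lectures}. Thus the corollary follows with no additional work beyond invoking the theorem twice.
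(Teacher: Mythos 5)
Your proof is correct and matches the paper's intent exactly: the corollary is stated without proof as an immediate consequence of Theorem \ref{Gale-diagrams}, since the polytope $P^\Delta$ there depends only on $B$ and not on the interior point $p$. Your observation about transitivity of combinatorial equivalence is precisely the implicit one-line argument.
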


\begin{example}
The points $\{v_1,\ldots,v_n\}$ in the previous statement need not be in convex position, but the dual of their configuration is. For example, consider a $1$-dimensional linear model inside the $3$-simplex, given by $c=(1/4,1/4,1/4,1/4)^T$ and $B=[1\; -5 \;\; 2\;\; 2]^T$. The parameter space is the interval $\Theta=[-1/20, 1/8]\subseteq \RR$ and the model is parametrized 
$$\Theta\to\Delta_{3},\;\;\;x\mapsto(-x+1/4, 5x+1/4, -2x+1/4, -2x+1/4).$$
The $4\times1$ matrix $B$ is a Gale transform of the non-convex configuration $\{(-1,-1), (1,1),\\(3,0), (0,3)\}$. Its convex hull is the triangle $\conv\{(-1,-1), (3,0), (0,3)\}$, a self-dual polytope. The logarithmic Voronoi polytope at any interior point $p=c-Bx$ is also a triangle, with the vertices
\begin{align*}
    \frac{1}{7}(0,\;40x + 2,\;0,\;-40x + 5),
    \frac{1}{7}(0,\;40x + 2,\;-40x + 5,\;0),
    \frac{1}{6}(-20x + 5,\;20x + 1,\; 0,\; 0).
\end{align*}
for the corresponding parameter $x\in(-1/20, 1/8)$.  This is demonstrated in Figure \ref{figure:ex-non-convex}.

\begin{figure}[H]
    \centering
    \includegraphics[width=0.4\textwidth]{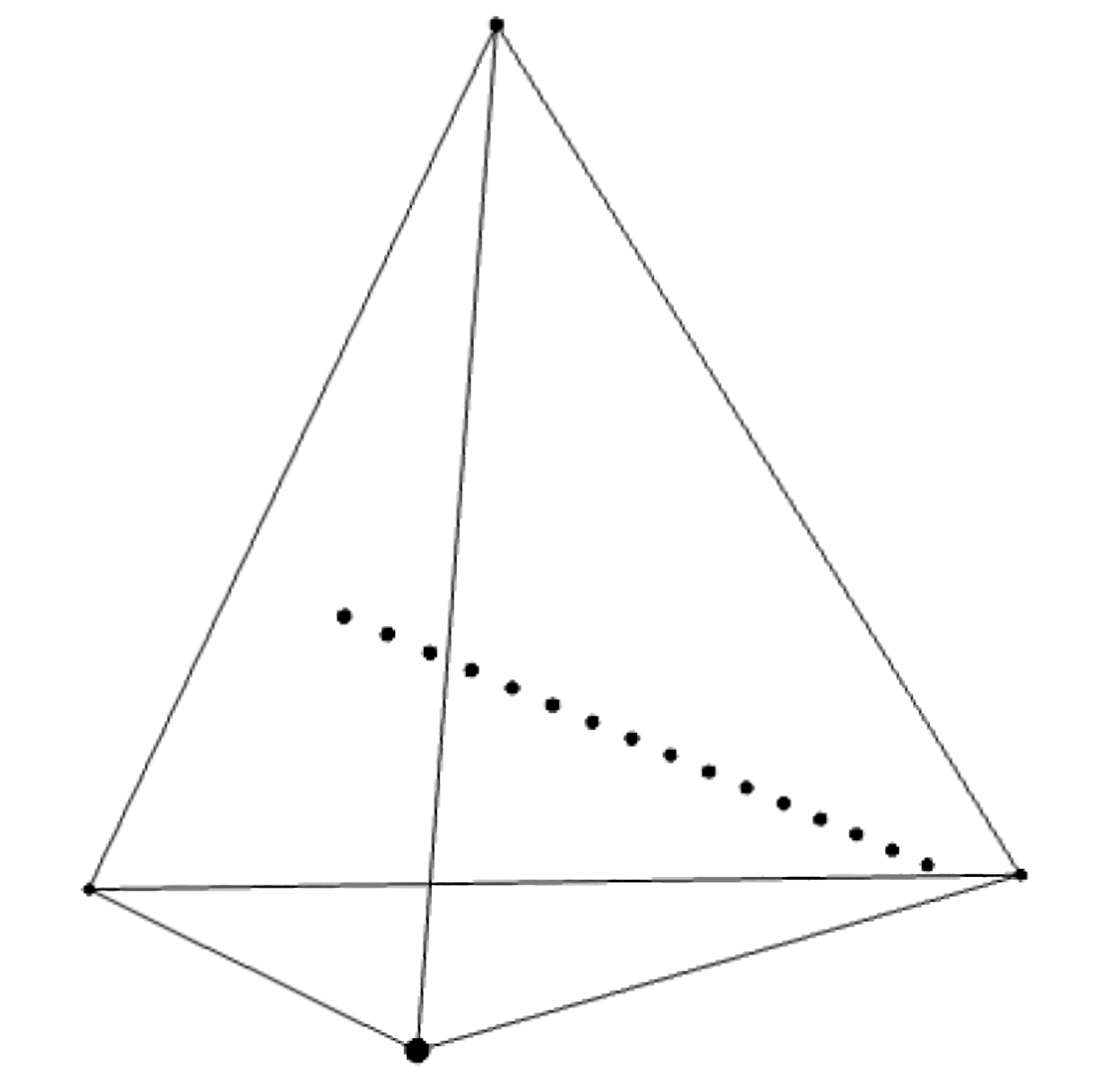}\hspace{2em}
    \includegraphics[width=0.4\textwidth]{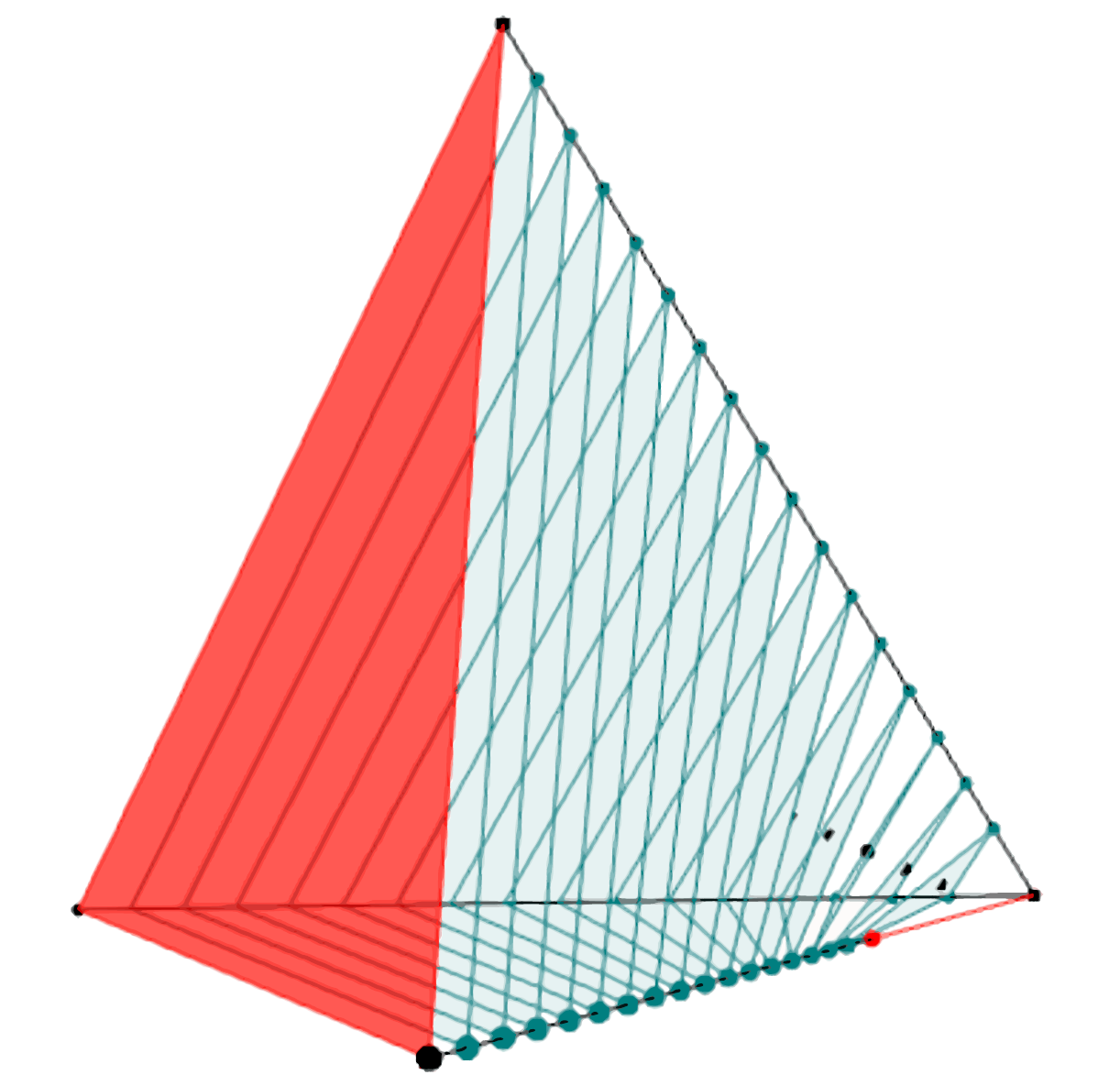}
    \caption{Sampled points on the linear model corresponding to $B=[1\,-5\;2\;2]$ and triangular logarithmic Voronoi cells.}
    \label{figure:ex-non-convex}
\end{figure}

If we take $B$ to be the matrix $[1\;\;5\;\;-3\;\;-3]^T$, which is a Gale diagram of a convex 4-gon, the logarithmic Voronoi polytopes at the interior points on this model would be quadrilaterals in $\Delta_3$. So, (a dual of) any 2-dimensional convex polytope on 4 vertices is a logarithmic Voronoi polytope for some 1-dimensional model in $\Delta_3$. In fact, this holds~in~general.
\end{example}

\begin{prop}\label{every-polytope-appears}
Every $(n-d-1)$-dimensional polytope with at most $n$ facets appears as a logarithmic Voronoi polytope of a $d$-dimensional linear model inside $\Delta_{n-1}$.
\end{prop}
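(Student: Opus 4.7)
The strategy is to invert Theorem \ref{Gale-diagrams}: given the target polytope $Q$, I would construct a linear model $\M$ whose defining matrix $B$ is the Gale transform of a configuration whose convex hull has polar dual $Q$.

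Concretely, let $Q$ be any $(n-d-1)$-dimensional polytope with $m\le n$ facets, and set $P:=Q^{\Delta}$, which is a $(n-d-1)$-dimensional polytope with $m$ vertices $w_1,\ldots,w_m$. Translate coordinates so that $0\in\mathrm{int}(P)$; this normalization ensures $P^{\Delta}=Q$. If $m<n$, augment the configuration by choosing $n-m$ additional points $w_{m+1},\ldots,w_n$ in $\mathrm{int}(P)$, so that $\conv\{w_1,\ldots,w_n\}=P$ (adding interior points does not change the convex hull). Let $B$ be the $n\times d$ matrix whose rows form the affine Gale transform of the $n$-point configuration $\{w_1,\ldots,w_n\}$; the dimension count works out because the configuration affinely spans $\RR^{n-d-1}$. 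By the definition of the affine Gale transform, the columns of $B$ encode affine dependences among the $w_i$'s, so each column sums to zero; in particular $\sum_i b_i=0$, which exhibits the row configuration as totally cyclic (this is exactly the observation used in the paragraph preceding Theorem \ref{Gale-diagrams}). Taking $c=\frac{1}{n}(1,\ldots,1)^T$ and $\Theta$ a small open neighborhood of $0\in\RR^d$ on which every coordinate of $c-B\theta$ is strictly positive, the triple $(B,c,\Theta)$ defines a valid $d$-dimensional linear model $\M\subseteq\Delta_{n-1}$ in the sense of Definition \ref{defn-linear-model}.

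Since Gale duality is an involution up to affine transformation, $\{w_1,\ldots,w_n\}$ is a vector configuration whose Gale diagram is $B$. Applying Theorem \ref{Gale-diagrams} to any interior point $p\in\M$ yields $\log\Vor_{\M}(p)\sim(\conv\{w_1,\ldots,w_n\})^{\Delta}=P^{\Delta}=Q$, as desired. The main obstacle to watch for is the verification that the constructed $B$ really satisfies the structural hypotheses of Definition \ref{defn-linear-model}, namely that its columns sum to zero and its rows form a totally cyclic configuration; both are standard consequences of the affine Gale-transform formalism together with the placement $0\in\mathrm{int}(P)$, but some care is needed in the case $m<n$ to confirm that the resulting non-convex-position configuration still qualifies as the primal of a Gale diagram whose convex hull recovers $P$. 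Minor edge cases (such as $d=n-1$, where $Q$ collapses to a point, or a simplex $Q$ requiring the maximal number $d$ of padded interior points) are handled uniformly by the same construction.
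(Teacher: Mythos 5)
Your proof is correct, but it takes a genuinely different route from the paper's. The paper argues in the ``primal'' direction: it invokes the fact (cited as Prop.\ 6.3 of a reference) that every $(n-d-1)$-dimensional polytope with $n$ facets is combinatorially an intersection of $\Delta_{n-1}$ with a codimension-$d$ affine space, handles fewer than $n$ facets by induction on the simplex dimension, writes the polytope as $\{x: Nx=0,\ x_n=1,\ x\geq 0\}$, and then rescales the columns of $N$ so that the recession cone contains the all-ones vector; this forces the rows of the rescaled matrix to sum to zero, so its transpose $B$ defines a valid linear model whose logarithmic Voronoi cone is the cone over $P$. You instead work in the ``dual'' direction, inverting Theorem \ref{Gale-diagrams}: polarize $Q$ to get $P=Q^{\Delta}$ with $m\leq n$ vertices, pad the vertex set with $n-m$ interior points of $P$ (which leaves $\conv$ and hence the polar dual unchanged), and take $B$ to be the Gale transform of the resulting $n$-point configuration; the columns of $B$ sum to zero because the all-ones vector lies in the row space of the matrix $A$ whose kernel the Gale transform spans, so $(B,c,\Theta)$ is a legitimate model and Theorem \ref{Gale-diagrams} finishes the argument. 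Your padding trick replaces the paper's induction and is arguably cleaner, and your route makes the logical dependence on Theorem \ref{Gale-diagrams} explicit, whereas the paper's construction is self-contained modulo the cited embedding result. Two small points of hygiene: the normalization should be stated as translating $Q$ so that $0\in\mathrm{int}(Q)$ \emph{before} setting $P=Q^{\Delta}$ (so that $P^{\Delta}=Q^{\Delta\Delta}=Q$ on the nose; translating $P$ afterward only gives combinatorial equivalence, which is all that is needed here anyway), and one should note that $B$ has rank $d$ and that $p=c$ is an interior point of the model, so Theorem \ref{Gale-diagrams} indeed applies.
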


\begin{proof}
Let $P$ be a polytope of dimension ${n-d-1}$ with at most $n$ facets. By Prop. 6.3 in \cite{every-polytope-is-an-intersection-of-a-simplex-with-affine-space}, any polytope in $\RR^{n-d-1}$ with $n$ facets is combinatorially equivalent  to an intersection of $\Delta_{n-1}$ with an affine space of co-dimension $d$. The same is true for a $(n-d-1)$-dimensional polytope with less than $n$ facets, as we could first intersect $\Delta_{n-1}$ with an affine hyperplane to obtain $\Delta_{n-2}$, and apply induction. That is, we may write our polytope $P$ as
$$P=\left\{x\in\RR^n:Mx=b, \sum_{i=1}^nx_i=1, x\geq 0\right\}$$
where $M=(m_{ij})\in\RR^{d\times n}$ and $b\in\RR^d$. 
Changing coordinates on $\RR^n$, such that $x_n=1$, we may re-write our polytope as
$P=\{x\in\RR^n: Nx=0, x_n=1, x\geq 0\}.$
Here, $N$ is the $d\times n$ matrix obtained from $M$ by subtracting $(m_{in},\ldots, m_{in},b_i)$ from the $i$th row. The cone of $P$ is then $C(P)=\{x\in\RR^n: Nx=0, x\geq 0\}$. Let $x'=(x_1',\ldots,x_n')\in C(P)$. Scaling the $i$th column of $N$ by $x_i'$, we get a new matrix $N'$. Then the cone $C'(P)=\{x\in\RR^n: N'x=0, x\geq 0\}$ contains the all-ones vector. This guarantees that each row of $N'$ sums to $0$, and letting $B:=(N')^T$, we see that the cone $C'(P)$ is equal to the cone of the logarithmic Voronoi polytope at an interior point of a model associated to $B$. As $B$ is an $n\times d$ matrix, this model is $d$-dimensional in $\Delta_{n-1}.$ Thus, $P$ is combinatorially equivalent to a logarithmic Voronoi polytope, as desired. \end{proof}

\section{On the boundary}\label{boundary}
In this section we study logarithmic Voronoi polytopes at the points of a linear model that lie on the boundary of the simplex, where the log-likelihood function is undefined. The next example demonstrates that the combinatorial type of logarithmic Voronoi polytopes at the points on the boundary of $\Delta_{n-1}$ will depend on the positioning of the linear model inside the simplex. Namely, if the intersection of the affine linear space defining the model with $\Delta_{n-1}$ is not general, logarithmic Voronoi polytopes at the boundary points will degenerate.

The definition of the log-likelihood function can be extended to the boundary of the simplex by considering each boundary component of the model as a linear model inside a smaller simplex. Namely, let $\M$ be a $d$-dimensional linear model inside $\Delta_{n-1}$, and let $f$ be a face of $\M$ that lies on the boundary of $\Delta_{n-1}$. Then the relative interior of $f$ lies in the interior of some $\Delta_{k-1}$, which is on the boundary of $\Delta_{n-1}$. We may then treat $f$ as its own linear model inside $\Delta_{k-1}$, and the log-likelihood function is defined for all interior points of $f$.

\begin{example}
Consider a polytope, combinatorially isomorphic to the 3-dimensional cube. According to Proposition \ref{every-polytope-appears}, this polytope appears as a logarithmic Voronoi cell at an interior point on some 2-dimensional linear model in $\Delta_5$. One such model $\M$ is given by
$$B=\begin{bmatrix}
-10 & -2 & -4 & 6 & 6 & 4 \\
3 & 2 & 1 & -1 & -2 & -3
\end{bmatrix}^T\text{ and } c=(1/12,1/3,1/6,1/12,1/6,1/6).$$
It is a triangle, whose vertices are parametrized by $(1/168, -1/21),
(1/24, 1/6),$ and \\$(-1/24, -1/9)$. The logarithmic Voronoi polytopes at the interior points of $\M$ are combinatorially equivalent to the 3-dimensional cube. The vertices are\\

\setlength{\tabcolsep}{1pt}
\begin{adjustbox}{width=1\textwidth}
\begin{tabular}{rcccccccl}
$($&$0,$& $0,$& $16x - 4y + 2/3,$& $-8x + 4/3y + 1/9,$& $-8x + 8/3y + 2/9,$& $0$&$)$\\

$($&$0,$ &$ 0,$& $84/5x - 21/5y + 7/10,$& $-72/5x + 12/5y + 1/5,$ &$ 0,$ &$ -12/5x + 9/5y + 1/10$&$)$\\

$($&$0,$ & ${4}/{3}x - {4}/{3}y + {2}/{9},$ & ${32}/{3}x - {8}/{3}y + {4}/{9},$& $0,$& $-12x + 4y + {1}/{3},$& $0$&$)$\\

$($&$0,$& $4x - 4y + {2}/{3},$&$ 2x - 1/2y + 1/12,$&$ 0,$ &$0,$&$ -6x + 9/2y + 1/4$&$)$\\

$($&$40/17x - 12/17y + 1/51,$ &$ 72/17x - 72/17y + 12/17,$&$ 0,$&$ 0, $& $0,$ &$ -112/17x + 84/17y + 14/51$&$)$\\

$($&$30x - 9y + 1/4,$& $0,$ & $0,$&$ -6x + y + 1/12, $& $-24x + 8y + 2/3,$ &$0 $&$)$\\

$($&$240/11x - 72/11y + 2/11,$& $12/11x - 12/11y + 2/11,$& $0,$& $0,$& $-252/11x + 84/11y + 7/11,$& $0$&$)$\\

$($&$35x - 21/2y + 7/24,$&$ 0,$& $0,$& $-27x + 9/2y + 3/8, $&$0,$&$ -8x + 6y + 1/3$&$)$
\end{tabular}
\end{adjustbox}









for the parameters $(x,y)$. Given a point in $\M$ on the boundary of $\Delta_{5}$, parametrized by $(\hat{x},\hat{y})$, the vertices of its logarithmic Voronoi polytope are obtained by plugging $(\hat{x},\hat{y})$ into the equations above. One checks that at all the points on the boundary of $\M$, the logarithmic Voronoi polytopes are also combinatorially equivalent to the 3-dimensional~cube. 

On the other hand, consider the model given by 
$$B=
\left[\begin{array}{rrrrrr}
-10 & -3 & -20 & 6 & 6 & 21 \\
1 & 3 & 2 & -1 & -3 & -2
\end{array}\right]^T\text{ and }c=(1/6,1/12,1/3,1/6,1/12,1/6).$$ It is a quadrilateral in $\Delta_5$ with the vertices parametrized by $(1/153, -1/68),
(2/171, 3/76),\\
(-5/324, 1/81),$ and 
$(-7/288, -11/144).$ The logarithmic Voronoi polytopes at the interior points are also combinatorially equivalent to the 3-dimensional cube. However, at the vertex parametrized by $(-5/324, 1/81)$, the logarithmic Voronoi polytope is no longer a cube: it degenerates to a 2-dimensional quadrilateral. This is explained by the fact that the vertex lies on a 2-dimensional face of the simplex (as opposed to a 3-dimensional face).
\end{example}
In general, whenever each vertex of a $d$-dimensional linear model lies on a $(n-d-1)$-dimensional face of $\Delta_{n-1}$, the combinatorial type of the logarithmic Voronoi cell at a boundary point is the same as at the interior points. Before proving this result, we first fix some notation.

\textbf{Notation:} Let $\M=\{c-Bx:x\in\Theta\}$ and let $z$ be a cocircuit of $B$ with support  $S$ such that $\sum_{i=1}^nz_if_i(x)=1$, where $f:\Theta\to\RR^n$ is a parametrization of $\M$. Let $V_z(x):\Theta\to \RR^n$ be the vertex of the logarithmic Voronoi polytope determined by $z$, as a function of $x\in\Theta$. That is, $V_z(x)=\big(z_1(c_1-\langle b_1, x\rangle), \ldots, z_n(c_n-\langle b_n, x\rangle)\big)\in\Delta_{n-1}.$ If $w=f(\hat{x})\in\M$ is a point on the boundary of the simplex, then the vertices of the logarithmic Voronoi polytope at $w$ are given as limits of the vertices $V_z(y^{(i)})$ where $\{f(y^{(i)})\}$ is a sequence of interior points converging to $w$. Let $M$ be the $n\times (d+1)$ matrix obtained by concatenating $c_i$ to the $i$th row of $B$, for all $i\in[n]$. If $U$ and $V$ are two sets of the same cardinality in $[n]$ and $[d+1]$, respectively, we denote by $M_{U,V}$ the submatrix of $M$, whose rows are indexed by $U$ and whose columns are indexed by $V$. We define $B_{U,V}$ similarly. Assume, without loss of generality, that the last $k$ columns of $M_{S,[d+1]}$ are linearly independent. We have the following technical~lemma. 

\begin{lemma}\label{less-technical-lemma}
Let $v=f(\hat{x})$ be a vertex of $\M$ with support $I$ and let $z$ be a cocircuit of $B$. The $i$th coordinate of $V_z(\hat{x})$ is zero if and only if $\det M_{\left([n]\setminus I\right)\cup\{i\},[d+1]}=0$.
\end{lemma}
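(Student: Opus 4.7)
The plan is to reduce the assertion to a single cofactor identity that expresses the target determinant as a nonzero scalar multiple of $v_i$; after that, the equivalence follows transparently from the factorization $V_z(\hat x)_i = z_i\, v_i$.

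First, I would unpack the coordinate directly from the definition of $V_z$: $V_z(\hat x)_i = z_i(c_i - \langle b_i,\hat x\rangle) = z_i\,v_i$. Rewriting the parametrization in the compact form $v = My$ with $y := (1,-\hat x)^\top \in \RR^{d+1}$, we obtain $v_i = \langle M_i, y\rangle$, the inner product of the $i$-th row of $M$ with $y$.

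Next, I would exploit the support of $v$: because $v_j = 0$ for every $j \in [n]\setminus I$, the vector $y$ lies in $\ker M_{[n]\setminus I,[d+1]}$. Under the standing general-position assumption (each vertex of $\M$ lies on an $(n-d-1)$-dimensional face of $\Delta_{n-1}$), $|[n]\setminus I| = d$ and the rows $\{M_j\}_{j \in [n]\setminus I}$ are linearly independent, so this kernel is one-dimensional. The cofactor formula identifies a second generator $\tilde y$ of the same kernel, with $\tilde y_j := (-1)^{j+1}\det M_{[n]\setminus I,[d+1]\setminus\{j\}}$, and writes $\tilde y = \lambda y$ for some $\lambda \neq 0$; nonvanishing of $\lambda$ follows from the rank hypothesis, which forces at least one $d\times d$ minor of $M_{[n]\setminus I,[d+1]}$ to be nonzero.

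Laplace-expanding the target determinant along the appended row $M_i$ then yields
\[
\det M_{([n]\setminus I)\cup\{i\},[d+1]} \;=\; (-1)^{d+1}\langle M_i,\tilde y\rangle \;=\; (-1)^{d+1}\lambda\,v_i,
\]
so the determinant vanishes iff $v_i = 0$. Combined with $V_z(\hat x)_i = z_i v_i$ and the assumption that the last $k$ columns of $M_{S,[d+1]}$ are linearly independent (which pins down $z$ via Cramer's rule on this invertible block, so that the factor $z_i$ tracks the equivalence correctly), the claimed iff follows. The index-repetition case $i \in [n]\setminus I$ is handled by the convention that a minor with a repeated or missing row is zero, matching the automatic vanishing $v_i = 0$ there. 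The main obstacle is the bookkeeping around the two supports $I$ and $S$ and the correct interpretation of the minor in the boundary case $i \in [n]\setminus I$; once the cofactor identity $\det M_{([n]\setminus I)\cup\{i\},[d+1]} = \pm\lambda\,v_i$ is established, the equivalence closes immediately.
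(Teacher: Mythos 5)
Your proposal is correct and it reaches the same cofactor identity that drives the paper's proof, but by a cleaner route. The paper normalizes $I=\{d+1,\dots,n\}$, solves $B_{[d],[d]}\hat x=(c_1,\dots,c_d)$ and the system $yM_{S,[d+1]}=(0,\dots,0,1)$ both by explicit Cramer's rule, substitutes, and then recognizes the bracketed expression as $\pm\det M_{[d]\cup\{i\},[d+1]}$. You instead observe that $y=(1,-\hat x)^\top$ spans the one-dimensional kernel of the $d\times(d+1)$ block $M_{[n]\setminus I,[d+1]}$, identify the cofactor vector $\tilde y$ as a second nonzero generator of that kernel, and read off $\det M_{([n]\setminus I)\cup\{i\},[d+1]}=\pm\lambda v_i$ from a Laplace expansion along the appended row. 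This is the same underlying linear algebra, but it makes the explicit computation of $z$ entirely unnecessary: all you need from the co-circuit is that $z_i\neq 0$ for $i\in S$, which holds by definition of support. That is also the one spot where your write-up is vaguer than it needs to be --- the appeal to ``Cramer's rule on the invertible block pinning down $z$'' is a detour; the clean statement is simply that $V_z(\hat x)_i=z_iv_i$ with $z_i\neq 0$ on $S$, so for $i\in S$ the vanishing of $V_z(\hat x)_i$ is equivalent to the vanishing of $v_i$, while for $i\notin S$ the coordinate vanishes identically regardless of the determinant, so the stated ``if and only if'' must be read, as the paper implicitly does when applying the lemma in Theorem \ref{linear-boundary}, as restricted to $i\in S$. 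Your treatment of the case $i\in[n]\setminus I$ via the repeated-row convention, and your justification that $\lambda\neq 0$ from the rank of $M_{[n]\setminus I,[d+1]}$, match the nondegeneracy the paper assumes when it takes $|[n]\setminus I|=d$ and $\det B_{[d],[d]}\neq 0$.
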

\begin{proof} Since $\M$ is $d$-dimensional, each vertex of the model is determined by the vanishing of precisely $d$ coordinates, i.e.
$c_i=\langle b_i,\hat x\rangle\;\forall i\in [n]\setminus I\text{ and } c_i>\langle b_i, \hat{x}\rangle\;\forall i\in I.$
Without loss of generality, assume $I = \{d+1,\ldots, n\}$, so $v$ is determined by the vanishing of the first $d$ coordinates. Then $\hat{x}=(\hat{x}_1,\ldots,\hat{x}_d)$ is a solution to the linear system $B_{[d],[d]}x=(c_1,\ldots, c_d)$. We may assume $\det B_{[d],[d]}\neq 0$. By Cramer's rule, we may then write $\hat{x}_i=(-1)^{d-i}\frac{\det M_{[d],[d+1]\setminus\{i\}}}{\det B_{[d],[d]}}$ for all $i\in[d]$. Let $S$ denote the support of the cocircuit $z$ and suppose $|S|=k$. Let $z'$ be the projection of $z$ onto its support. Since $z$ is a cocircuit of $B$ such that $\sum_{i=1}^n z_i(c_i-\langle b_i,x\rangle)=1$, it satisfies the equation $c_1z_1+\ldots+c_nz_n=1$. Thus,
$z'$ is a solution to the system $yM_{S, [d+1]}=(0,\ldots,0,1)\in\RR^{d}$. If $d+1\geq k$, then $d+1-k$ equations in this system must be redundant. From our assumption, the first $d+1-k$ equations are redundant, so removing them, we get a $k\times k$ linear system with a unique solution. Using Cramer's rule again, we find that for any $i\in S$, $z_{i}=(-1)^{k+i'}\frac{\det B_{S\setminus\{i\},[d]\setminus[d+1-k]}}{\det M_{S,[d+1]\setminus[d+1-k]}}$, where $i'$ is the index of $z_i$ in $z'$. If $i\notin S$, the $i$th coordinate of $V_z(\hat{x})$ is 0. If $i\in S$, we have the $i$th coordinate of $V_z(\hat{x})$ is given by
$$\msmall{\frac{\det B_{S\setminus\{i\},[d]\setminus[d+1-k]}}{\det M_{S,[d+1]\setminus[d+1-k]}B_{[d],[d]}}\left[c_i\det B_{[d],[d]}-\left((-1)^{d-1}b_{i1}\det M_{[d],[d+1]\setminus\{1\}}+\ldots+b_{id}\det M_{[d],[d+1]\setminus\{d\}}\right)\right]}.$$
Note the expression in square brackets is $(-1)^{k+i'}\det M_{[d]\cup\{i\},[d+1]}$, so the $i$th coordinate of $V_z(\hat{x})$ vanishes if and only if $\det M_{[d]\cup\{i\},[d+1]}=0$.
The case $d+1<k$ is not possible, as it would imply the existence of a cocircuit whose support is strictly contained in $S$. 
\end{proof}

\begin{theorem}\label{linear-boundary}
Let $\M$ be a $d$-dimensional linear model obtained by intersecting the affine linear space $L$ with $\Delta_{n-1}$. Let $w\in \M$ be a point on the boundary of the simplex. If $L$ intersects $\Delta_{n-1}$ transversally, then the logarithmic Voronoi polytope at $w$ has the same combinatorial type as those at the interior points of $\M$.
\end{theorem}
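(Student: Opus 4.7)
The plan is to identify both $\log\Vor_{\M}(w)$ and $\log\Vor_{\M}(p)$ (for an interior point $p$) as cross-sections of one common pointed cone by affine hyperplanes, and to use the transversality hypothesis to guarantee that neither cross-section degenerates. Set $I := \{i : w_i > 0\}$ and suppose $w$ lies in the relative interior of a $k$-face $F$ of $\M$. Since $L$ meets $\Delta_{n-1}$ transversally, $F$ must lie in a face of $\Delta_{n-1}$ of the expected dimension $n-d+k-1$, so $|I| = n-d+k$ and the $d-k$ rows $\{b_i\}_{i \in [n]\setminus I}$ of $B$ are linearly independent. This is the only consequence of transversality the argument needs.

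Mimicking the derivation in Proposition~\ref{vertices-are-cocircuits} at the boundary, I would first establish
\[
\log\Vor_{\M}(w) \;=\; Q_w\cdot\diag(w), \qquad Q_w \;:=\; \{\,r \in \RR^n_{\geq 0} : rB = 0,\ \langle r, w\rangle = 1\,\},
\]
and analogously $\log\Vor_{\M}(p) = Q_p \cdot \diag(p)$. Since $\ell_u(w) = -\infty$ as soon as $u_i > 0$ for some $i \notin I$, any $u \in \log\Vor_{\M}(w)$ must have support in $I$; restricting the likelihood analysis to the sub-simplex indexed by $I$ (or passing to a Hausdorff limit from interior $p$) yields the claimed formula. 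Next, I would check that the map $r \mapsto r\cdot\diag(w)$, although singular on $\RR^n$, is affinely injective on $Q_w$: if $r, r' \in Q_w$ have equal image then $r|_I = r'|_I$, so $r - r'$ is supported on $[n]\setminus I$ and satisfies $(r-r')B = 0$, whence the linear independence above forces $r = r'$. Thus $\log\Vor_{\M}(w) \cong Q_w$ as polytopes.

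Finally, I would compare $Q_w$ with $Q_p$. Both polytopes are cross-sections of the pointed cone $C := \{r \in \RR^n_{\geq 0} : rB = 0\}$ by affine hyperplanes, and the combinatorial type of such a cross-section depends only on which extreme rays of $C$ are met. The extreme rays of $C$ are spanned by the positive co-circuits of $B$, as in the proof of Proposition~\ref{vertices-are-cocircuits}. A positive co-circuit $z$ could fail to meet $\{\langle r, w\rangle = 1\}$ only if $\text{supp}(z) \subseteq [n]\setminus I$; but then $\sum_{i \notin I} z_i b_i = 0$ with $z \geq 0$ nonzero, contradicting the linear independence from transversality. So both hyperplanes meet every extreme ray of $C$, giving $Q_w \cong Q_p$, and chaining the equivalences yields $\log\Vor_{\M}(w) \cong \log\Vor_{\M}(p)$.

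The main technical obstacle is the first identification: the likelihood equations of Proposition~\ref{vertices-are-cocircuits} are derived at interior points and must be carefully extended to the boundary, where $\ell_u(w)$ is finite only after restricting to the sub-simplex $\{u : u_i = 0 \text{ for } i \notin I\}$. Fortunately, Lemma~\ref{less-technical-lemma} already does most of this step at the level of vertices: under transversality, each interior vertex $V_z(y)$ converges as $y \to \hat x$ to a point $V_z(\hat x) = z\cdot\diag(w)$ whose support is exactly $\text{supp}(z) \cap I$, so distinct interior vertices do not collapse to a common limit. This matches the affine injectivity used above and confirms that the vertex set of $\log\Vor_{\M}(w)$ is in bijection with that of $\log\Vor_{\M}(p)$, as required for the combinatorial types to agree.
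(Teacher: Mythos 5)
Your argument is correct, and it takes a genuinely different route from the paper. The paper reduces to the vertices of $\M$ and invokes Lemma~\ref{less-technical-lemma}: by Cramer's rule, a coordinate of a limiting vertex $V_z(\hat x)$ vanishes unexpectedly if and only if a certain $(d+1)\times(d+1)$ minor of $M$ vanishes, which is exactly the condition that the vertex of $\M$ sits on a too-small face of $\Delta_{n-1}$; transversality rules this out, so no Voronoi vertex degenerates. You instead realize both $\log\Vor_{\M}(p)$ and $\log\Vor_{\M}(w)$ as affine cross-sections of the single pointed cone $C=\{r\geq 0: rB=0\}$, and reduce everything to two consequences of transversality: $|I|=n-d+k$ and the linear independence of $\{b_i\}_{i\in[n]\setminus I}$, which guarantee (i) that every extreme ray of $C$ (positive co-circuit) meets the hyperplane $\langle r,w\rangle=1$, so $Q_w$ is a base of $C$ just as $Q_p$ is, and (ii) that $r\mapsto r\cdot\diag(w)$ is injective on $Q_w$ despite $\diag(w)$ being singular. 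Your version has two advantages: it treats an arbitrary boundary point directly rather than reducing to vertices, and it yields the full face-lattice isomorphism (both cells are bases of the same cone), whereas the paper's proof only verifies that vertex supports are preserved and leaves the passage from ``no vertex degenerates'' to ``same combinatorial type'' implicit. What the paper's determinantal lemma buys in exchange is an explicit computational criterion for \emph{which} coordinates collapse when transversality fails, which is what powers Example~\ref{moduli}. The one step you should make rigorous is the opening identification $\log\Vor_{\M}(w)=Q_w\cdot\diag(w)$: the likelihood equations are only derived at interior points, so this identity must be justified either by the limit definition the paper adopts in its Notation paragraph (in which case it follows from the convergence $V_z(y)\to z\cdot\diag(w)$ for each co-circuit $z$, using (i) to normalize) or by redoing the critical-point analysis on the sub-simplex supported on $I$; you correctly flag this as the main obstacle, and either route closes it.
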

\begin{proof}
It suffices to show that the combinatorial type of the logarithmic Voronoi polytopes at the vertices of the model is the same as at the interior points. Let $v=f(\hat{x})$ be a vertex of $\M$ and without loss of generality assume that it has support $\{d+1,\ldots,n\}$. By Lemma \ref{less-technical-lemma}, if
$i\in S\cap \{d+1,\ldots,n\}$, the logarithmic Voronoi vertex $V_z(\hat{x})$ degenerates to the vertex with 0 in the $i$th coordinate if and only if $\det M_{[d]\cup\{i\},[d+1]}=0$. This condition translates to $v$ lying on a face of $\Delta_{n-1}$ of dimension less than $n-d-1$, namely the one spanning the affine space $\{x\in\RR^n: x_j=0\text{ for all }j\in[d]\cup\{i\}\}$. This means that the affine space $L$ does not intersect $\Delta_{n-1}$ transversally, a contradiction. Thus, the logarithmic Voronoi polytope at any vertex of $\M$ has the same combinatorial type as at the interior points.
\end{proof}

The next example gives a concrete formula for the vertices of logarithmic Voronoi polytopes when the linear model is one-dimensional. The compact description follows from the fact that cocircuits are easy to compute in this case. A one-dimensional model will intersect the simplex transversally if and only if the $1\times n$ matrix $B$ has no repeated entries.

\begin{example}[$d=1$]
Let $\M=\{c-Bx:x\in\Theta\}$ be a 1-dimensional linear model inside the simplex $\Delta_{n-1}$. Let $B=[b_1,\ldots, b_m,b_{m+1},\ldots b_n]^T$, and without loss of generality assume $b_i>0$ for $i=1,\ldots, m$ and $b_i<0$ for $i=m+1,\ldots,n$. Then $\Theta\subseteq \RR$ is a closed interval $[x_\ell,x_r]$, where $x_\ell=c_\ell/b_\ell$ for some $\ell>m$ and $x_r=c_r/b_r$ for some $r\leq m$. Rotating the simplex, if necessary, we may ensure that $r=1$. Note that any positive cocircuit $z$ of $B$ has support $\{i,j\}$ of size two, where $b_{i}>0$ and $b_{j}<0$. So, we find the logarithmic Voronoi polytope at $x_r$ is the polytope at the boundary of $\Delta_{n-1}$ with the vertices
$$\{e_j: b_j<0\}\cup \Bigg\{\frac{(c_i-b_i(c_1/b_1))b_j}{b_jc_i-b_ic_j}e_i-\frac{(c_j-b_j(c_1/b_1))b_i}{b_jc_i-b_ic_j}e_j:{\substack{i\neq 1, \\b_i>0,\\ b_j<0}}\Bigg\}.$$
The logarithmic Voronoi polytope at $x_\ell$ is described similarly. Figure \ref{figure:tetrahedra-examples-d=1} plots logarithmic Voronoi polytopes at sampled points on 1-dimensional linear models in general position given by $c=(1/4,1/4,1/4,1/4)$,  $B=[1, -5, 3, 1]^T$ and $B=[-2, -1, 1, 2]^T$, respectively.

\comm{
\begin{figure}[H]
    \centering
    \includegraphics[width=0.45\textwidth]{figures/210816_Tetraeder-2-TE.pdf} \includegraphics[width=0.45\textwidth]{figures/210816_Tetraeder-1-TE.pdf} 
    \caption{Logarithmic Voronoi polytopes at points on linear models given by $c=(1/4,1/4,1/4,1/4)$ and $B=[1, -5, 3, 1]^T$ (left) and $B=[-2, -1, 1, 2]^T$ (right).}
    \label{figure:tetrahedra-examples-d=1}
\end{figure}
}

\end{example}

\begin{example}[Moduli spaces]\label{moduli}
The moduli space $\M_{g,m}$ is the space of genus $g$ curves with $m$ marked points. The moduli space $\M_{0,m}$ is the space of $m$ marked points in $\PP^1$ and can be viewed as a linear statistical model of dimension $m-3$ inside the simplex $\Delta_{n-1}$, where $n=m(m-3)/2$. The connection between particle physics and algebraic statistics via moduli spaces has been studied in \cite{modulispaces}. The model $\M_{0,6}$ is a 3-dimensional linear model (a tetrahedron) inside the 8-dimensional simplex. It is parametrized by
$$(x,y,z)\mapsto \left(\frac{5x}{9} ,\,\frac{y}{3} ,\,\frac{z}{9} ,\,\frac{y-x}{9} ,\,\frac{z-x}{9},\,\frac{y-z}{9} ,\,\frac{1-x}{3} ,\,\frac{1-y}{3} ,\,\frac{1-z}{3},\right).$$
Logarithmic Voronoi polytopes at the interior points on this model are 5-dimensional with the $f$-vectors $(7,19,26,19,7)$.

The affine space defining this model does not intersect the simplex transversally; furthermore, none of the four vertices lie on the interior of a $5$-dimensional face of $\Delta_{8}$. Two of the vertices lie on 4-dimensional faces of $\Delta_8$ and the other two vertices lie on 2-dimensional faces of $\Delta_8$. The logarithmic Voronoi polytopes at these vertices degenerate into 4-dimensional and 2-dimensional polytopes, respectively. These polytopes are the entire faces of $\Delta_8$ that contain the corresponding vertices in their relative interior.
\end{example}

\section{Partial linear models}\label{partial}
A \textit{partial} linear model of dimension $d$ is a statistical model given by a $d$-dimensional polytope inside the probability simplex $\Delta_{n-1}$, such that not all facets of the polytope lie on the boundary of the simplex.

Let $\M$ be a partial linear model of dimension $d$ inside $\Delta_{n-1}$. The intersection of the affine span of the polytope $\M$ with the simplex $\Delta_{n-1}$ is a $d$-dimensional linear model $\M'$. We say $\M'$ \textit{extends} $\M$. As in Section \ref{prelim}, $\M'=\{c-Bx:x\in\Theta'\}$
for some appropriate $c,B$, and parameter space $\Theta'\subseteq \RR^d$. Since $\M'$ extends $\M$, it follows that we may also write 
$$\M=\{c-Bx:x\in\Theta\}$$
for some $\Theta\subseteq\Theta'$. Note that both $\Theta$ and $\Theta'$ are polytopes.

\begin{theorem}\label{partial-interior}
Let $\M$ be a partial linear model of dimension $d$ with extension $\M'$. If $p$ is a point in the relative interior of $\M$, then $\log\Vor_{\M}(p)=\log\Vor_{\M'}(p)$.
\end{theorem}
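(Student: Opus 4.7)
The strategy is to prove the two containments $\log\Vor_{\M'}(p) \subseteq \log\Vor_{\M}(p)$ and $\log\Vor_{\M}(p) \subseteq \log\Vor_{\M'}(p)$ separately, with the key observation being that $\M$ and $\M'$ share a common affine span.

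The first containment follows almost immediately from $\M \subseteq \M'$. Indeed, if $u \in \log\Vor_{\M'}(p)$, then $p$ is the (unique, by strict concavity of $\ell_u$) maximizer of $\ell_u$ over $\M'$. Since $p \in \M \subseteq \M'$, $p$ remains a maximizer over the smaller set $\M$, so $u \in \log\Vor_{\M}(p)$. This direction does not use the relative interior hypothesis.

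For the reverse containment I would fix $u \in \log\Vor_{\M}(p)$ and let $p' \in \M'$ denote the MLE of $u$ on $\M'$, which exists and is unique by strict concavity of $\ell_u$ on $\M'$. Since $p \in \M'$, we have $\ell_u(p') \geq \ell_u(p)$. Suppose for contradiction that $p' \neq p$, and consider the convex combinations $q_t = (1-t)p + t p'$ for $t \in [0,1]$, which all lie in $\M'$ by convexity. Strict concavity of $\ell_u$ at $p$ (which lies in $\Delta_{n-1}^{\circ}$, because $p$ being in the relative interior of $\M$ prevents it from lying on any simplex-boundary facet of $\M$) yields
$$\ell_u(q_t) \;>\; (1-t)\,\ell_u(p) + t\,\ell_u(p') \;\geq\; \ell_u(p)$$
for every $t \in (0,1)$. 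The crucial point is that $\M$ and $\M'$ share the same affine span $L$, since both are described as $\{c - Bx : x \in \Theta\}$ and $\{c - Bx : x \in \Theta'\}$ with $\Theta \subseteq \Theta'$ parametrizing into the same affine subspace. Hence a relative neighborhood of $p$ in $L$ is contained in $\M$, and for sufficiently small $t > 0$ we get $q_t \in \M$ with $\ell_u(q_t) > \ell_u(p)$, contradicting the assumption that $p$ is the MLE of $u$ on $\M$. Therefore $p' = p$, and $u \in \log\Vor_{\M'}(p)$.

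The main subtlety, and the place where the relative interior hypothesis is essential, is the perturbation step: we need $p$ to admit a small relative neighborhood inside $L$ that lies in $\M$. Without the interior assumption, $p$ could sit on a non-simplex facet of $\M$, and a perturbation in the direction of $p'$ might exit $\M$ immediately even though it remains in $\M'$. Once one identifies $L$ as the common affine span of $\M$ and $\M'$, this pathology is ruled out and the convexity plus strict concavity argument closes the proof.
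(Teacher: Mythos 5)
Your proof is correct, and for the reverse containment it takes a genuinely different route from the paper. The forward containment is identical in both. For $\log\Vor_{\M}(p)\subseteq\log\Vor_{\M'}(p)$, the paper argues globally: it invokes the facts that logarithmic Voronoi cells at distinct points of $\M$ are pairwise disjoint and fill the simplex, produces a full-dimensional ball $B_{n-1}(p)$ covered by the cells of points in a small $d$-dimensional ball $B_d(p)\subseteq\M$, and derives a contradiction from the segment $[u,p]$ having to pass through that ball. Your argument is purely local and more elementary: it is the standard fact that a maximizer of a concave function over a convex set $\M'$ is detected by first-order behavior, so if the MLE $p'$ of $u$ on $\M'$ were different from $p$, the segment $[p,p']$ (which stays in the common affine span $L$ and hence, near $p$, in $\M$, by the relative-interior hypothesis) would contain points of $\M$ with strictly larger likelihood. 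This buys you independence from the tessellation property, which the paper uses without proof at this point and which is morally downstream of statements like this one; it also sidesteps some garbled bookkeeping in the paper's argument (e.g.\ the point ``$q\neq s\in B_d(p)$''). Two small refinements: you do not actually need strict concavity, since if $\ell_u(p')>\ell_u(p)$ then plain concavity already gives $\ell_u(q_t)\geq(1-t)\ell_u(p)+t\,\ell_u(p')>\ell_u(p)$ for $t\in(0,1)$, so the argument covers $u$ on the boundary of the simplex where $\ell_u$ is only concave; and your parenthetical claim that $p\in\Delta_{n-1}^{\circ}$ deserves a word of justification (it holds because a partial linear model, like a linear model, is assumed to meet the open simplex, so its relative interior avoids every facet of $\Delta_{n-1}$), though the paper is no more careful on this point than you are.
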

\begin{proof}
We show these sets are contained in each other. First, let $u\in\log\Vor_{\M'}(p)$. Then $\ell_u(x)$ is maximized at $p$ in $\M'$. Since $\M\subseteq\M'$, and $p\in\M$ as well, it follows that $\ell_u(x)$ will also be maximized at $p$ in $\M$. Thus, $u\in\log\Vor_{\M}(p)$. 

Now, let $u\in\log\Vor_{\M}(p)$. If $u\notin\log\Vor_{\M'}(p)$, then over $\M'$, $\ell_u(x)$  is maximized at some other point $q\in\M'\setminus\M$. Then the line segment $[p,q]$ must intersect the boundary of the model $\M$. Note that any point on $[p,q]$ can be written as $a_x=(1-x)p+xq$ for some $x\in[0,1]$. Recall that the
log-likelihood function $\ell_u$ is strictly concave on the simplex and hence on any convex subset of the simplex, such as our model $\M'$. So, for any $x\in(0,1)$, we have
$$
\ell_u(a_x)=\ell_u((1-x)p+xq)>(1-x)\ell_u(p)+x\ell_u(q)>\ell_u(p),$$
where the last inequality follows from the assumptions $\ell_u(q)>\ell_u(p)$ and $x>0$. But since $p$ is in the relative interior of the polytope $\M$, this implies that there is  another interior point $r$ on the line segment $[p,q]$ such that $\ell_u(r)>\ell_u(p)$. This is a contradiction to $u$'s inclusion in $\log\Vor_{\M}(p)$. Therefore, $u\in\log\Vor_{\M'}(p)$, as~desired.
\end{proof}

The theorem above tells us that the logarithmic Voronoi polytopes at points in the interior of the polytope $\M$ are the same as those in the full linear extension $\M'$. The points $u\in\Delta_{n-1}$ that are not in $\log\Vor_\M(p)$ for any $p$ in the interior of $\M$ will be mapped to the points on the boundary of $\M$ via the MLE map. Note that for each point $q$ on the boundary of $\M$, we still have $\log\Vor_{\M'}(q)\subseteq\log\Vor_{\M}(q)$. However, in general, this containment will be strict.

Given a facet $F$ of $\M$, let $p$ be a point in the relative interior of $F$ (i.e. $p$ does not lie on any lower-dimensional face). Treating $F$ as its own partial linear model with extension $F'$ inside $\Delta_{n-1}$, we know that $\log\Vor_{F}(p)=\log\Vor_{F'}(p)$ is an $(n-d)$-dimensional polytope. Moreover, it is clear that $\log\Vor_{\M'}(p)\subseteq \log\Vor_{F}(p)$. Observe that $\log\Vor_{\M'}(p)$ has dimension $n-d-1$ and the boundary of this polytope is included in the boundary of $\log\Vor_{F}(p)$, since these logarithmic Voronoi polytopes are the intersections of affine linear spaces with the simplex. Hence, $\log\Vor_{\M'}(p)$ divides the polytope $\log\Vor_{F}(p)$ into two $(n-d)$-dimensional polytopes. Since $p$ is on the boundary of the polytope $\M$, one of those polytopes will intersect the relative interior of $\M$.

\textbf{Notation:} Denote the two polytopes defined above by $Q_p$ and $\overline{Q}_p$. Assume $\overline{Q}_p$ is the polytope that intersects the relative interior of $\M$.

\begin{figure}[H]
    \centering
    \includegraphics[width=0.35\textwidth]{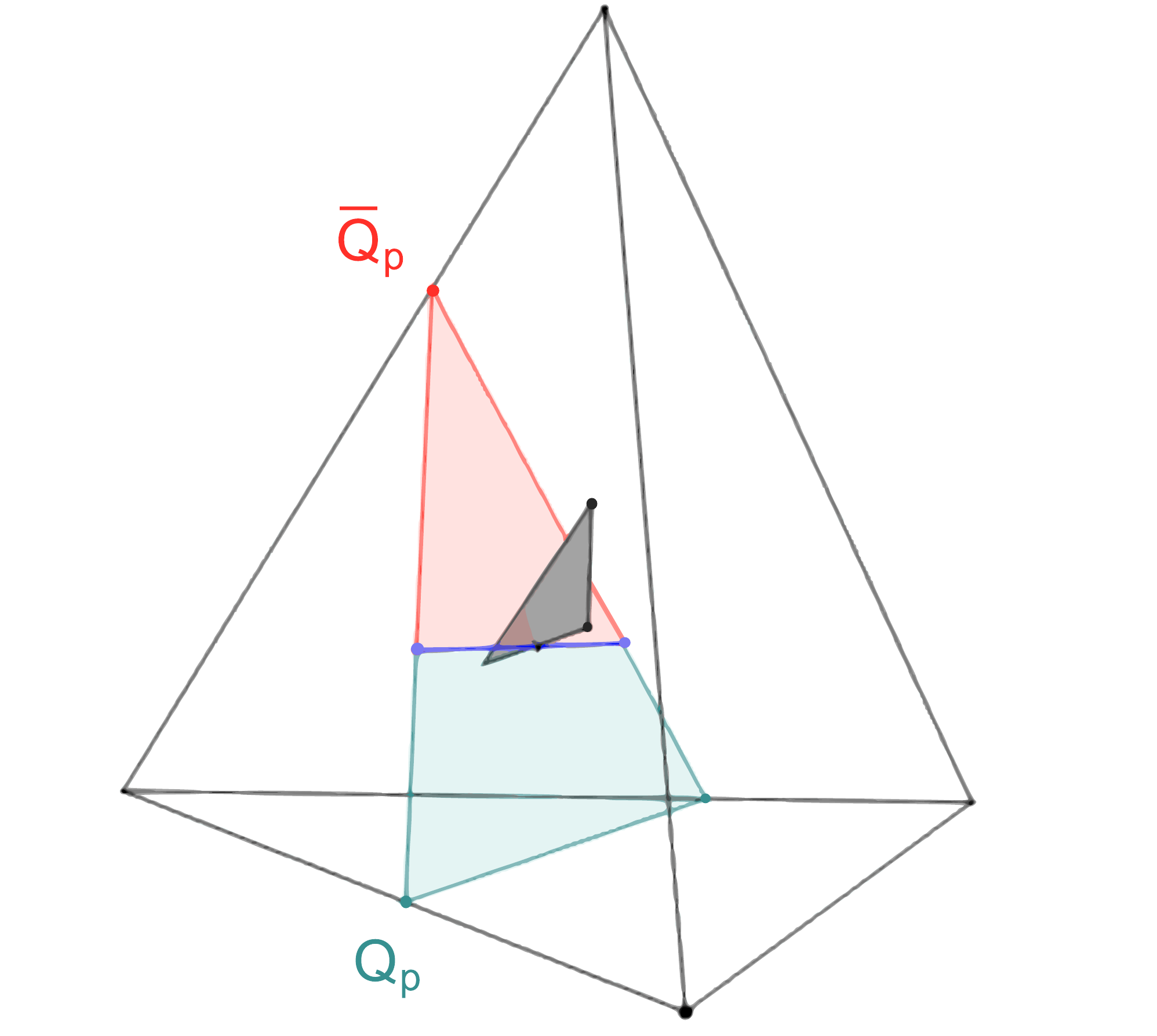}
    \includegraphics[width=0.45\textwidth]{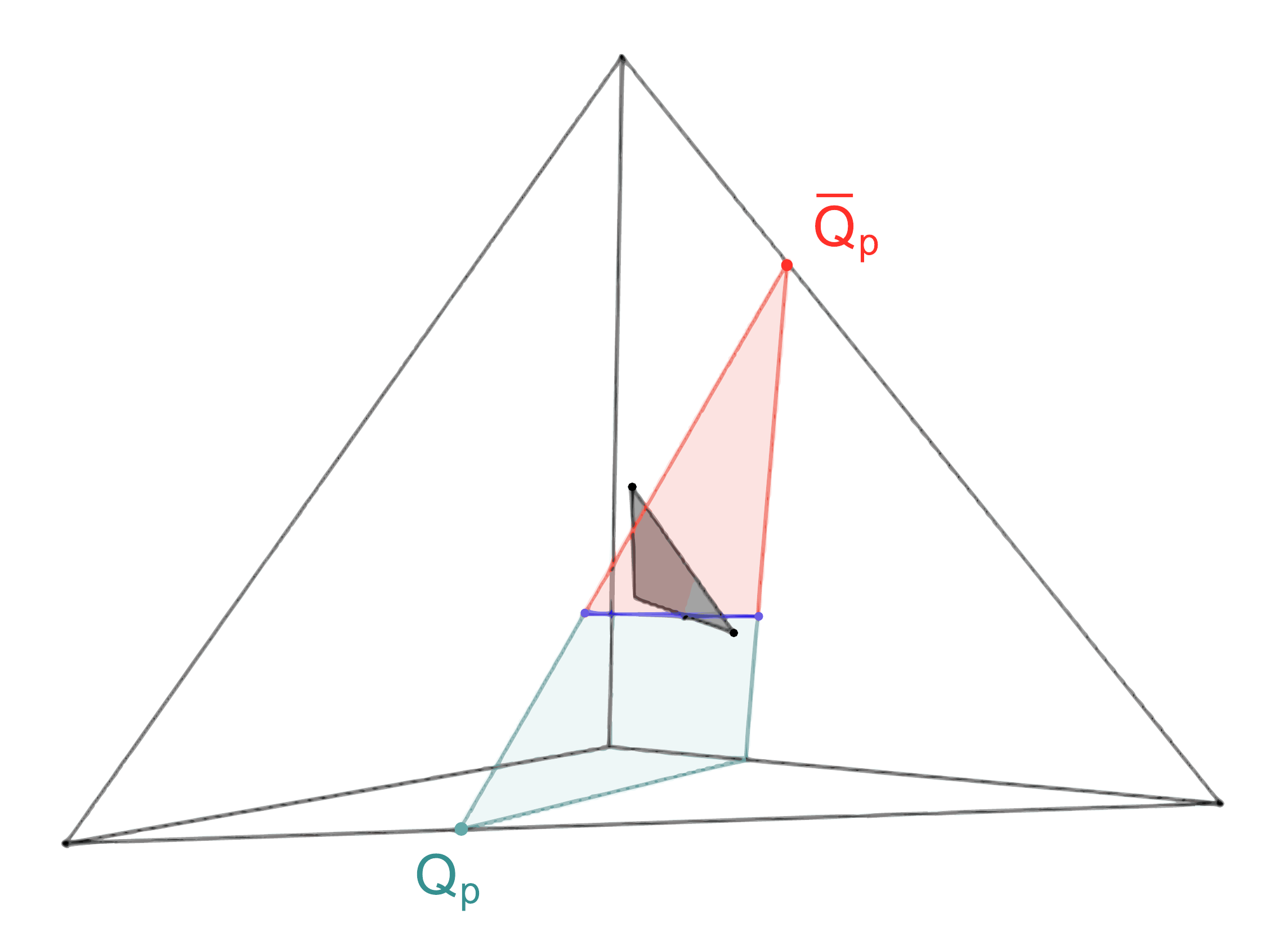}

    \caption{Polytopes $Q_p$ and $\overline{Q}_p$  for a point on a facet of a 2-dimensional model in $\Delta_3$. }
    \label{figure:Q-polytopes}
\end{figure}

\begin{theorem}\label{partial-facets}
Let $p$ be a point in the relative interior of some facet $F$ of $\M$. Let $Q_p$ be as above. Then $Q_p=\log\Vor_\M(p)$.
\end{theorem}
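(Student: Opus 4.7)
The plan is to establish $Q_p = \log\Vor_\M(p)$ by two inclusions, using first-order (KKT) optimality conditions for $\ell_u$ at the boundary point $p$ of $\M$. First, I would argue $\log\Vor_\M(p) \subseteq \log\Vor_F(p)$: for any $u \in \log\Vor_\M(p)$, the point $p$ maximizes $\ell_u$ over $\M \supseteq F$, so since $p \in F$, strict concavity of $\ell_u$ forces $p$ to be the unique maximizer of $\ell_u$ over $F$ as well, i.e.\ $u \in \log\Vor_F(p)$. Writing $\M' = \{c - Bx : x \in \Theta'\}$ and choosing coordinates so that $F$ is cut out from $\M'$ by fixing $x_d = \hat{x}_d$, the likelihood equations along the $d-1$ tangent directions of $F$ read $\sum_i (u_i/p_i) b_{ij} = 0$ for $j < d$ and define $\log\Vor_F(p)$, while the remaining $d$-th equation $\sum_i (u_i/p_i) b_{id} = 0$ cuts out $\log\Vor_{\M'}(p)$ inside $\log\Vor_F(p)$.

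Since $p$ lies in the relative interior of $F$ and $F$ is a facet of $\M$, the KKT condition for $p$ to maximize $\ell_u$ over $\M$ requires that the directional derivative of $\ell_u$ at $p$ in the direction $\vec{v}$ pointing from $F$ into the relative interior of $\M$ be nonpositive. This amounts to a sign constraint on $\sum_i (u_i/p_i) b_{id}$, which places $\log\Vor_\M(p)$ entirely within one of the two half-polytopes $Q_p$ or $\overline{Q}_p$. To identify which, I would observe that any $q$ in the relative interior of $\M$ with $q \neq p$ satisfies $q \in \argmax_{y \in \M} \ell_q(y)$, so $q \notin \log\Vor_\M(p)$; thus $\log\Vor_\M(p)$ is disjoint from the relative interior of $\M$, whereas $\overline{Q}_p$ meets it by definition. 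Hence $\log\Vor_\M(p) \subseteq Q_p$.

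For the reverse inclusion, take $u \in Q_p$: the $F$-likelihood equations hold at $p$, and the $\vec{v}$-directional derivative of $\ell_u$ at $p$ is nonpositive. Any $q \in \M$ decomposes as $q = p + \alpha \vec{v} + \vec{w}$ with $\alpha \geq 0$ and $\vec{w}$ tangent to $F$, since $\M$ lies on the $\vec{v}$-side of $F$ within the affine span of $\M'$. Then the directional derivative $\partial_{q - p}\ell_u|_p = \alpha\, \partial_{\vec{v}}\ell_u|_p + \partial_{\vec{w}}\ell_u|_p \leq 0$, and strict concavity of $\ell_u$ gives $\ell_u(q) \leq \ell_u(p)$ with equality only at $q = p$, so $p$ is the unique MLE of $u$ over $\M$ and $u \in \log\Vor_\M(p)$. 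The main obstacle I expect is keeping orientations straight: the explicit sign of $\sum_i (u_i/p_i) b_{id}$ that defines $Q_p$ depends on coordinate choices and on which side of the hyperplane $\{x_d = \hat{x}_d\}$ the interior of $\M$ sits, but the intrinsic argument above, using disjointness from the relative interior of $\M$ to distinguish $Q_p$ from $\overline{Q}_p$, sidesteps this bookkeeping.
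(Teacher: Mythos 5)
Your argument is correct, but it takes a genuinely different route from the paper. You characterize $\log\Vor_\M(p)$ directly by first-order optimality: since $p$ lies in the relative interior of the facet $F$, the tangent cone of $\M$ at $p$ is the tangent space of $F$ plus the ray spanned by the inward direction $\vec v$, so $p$ maximizes the concave function $\ell_u$ over $\M$ if and only if the $F$-likelihood equations $\sum_i(u_i/p_i)b_{ij}=0$, $j<d$, hold \emph{and} $\sum_i(u_i/p_i)b_{id}$ has the appropriate sign. This identifies $\log\Vor_\M(p)$ as exactly one of the two closed halves into which $\log\Vor_{\M'}(p)$ cuts $\log\Vor_F(p)$, and your disjointness observation (interior points of $\M$ are their own MLEs, so $\log\Vor_\M(p)$ misses $\mathrm{relint}\,\M$ while $\overline{Q}_p$ does not) pins that half down as $Q_p$ without any sign bookkeeping. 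The paper instead proves $\log\Vor_\M(p)\subseteq Q_p$ by a convexity argument ($\conv\{u,\log\Vor_{\M'}(p)\}$ would otherwise hit $\mathrm{relint}\,\M$) and proves $Q_p\subseteq\log\Vor_\M(p)$ vertex by vertex, via super-level sets of $\ell_w$, the splitting of $\M'$ by $F'$ into $S_1,S_2$, and the ruled hypersurface $\bigcup_{r\in F'}\log\Vor_{\M'}(r)$, with a two-case analysis. Your KKT route is shorter and replaces that case analysis with a single inequality; what it glosses over, and what you should add, are two points the paper also has to handle: (i) $\log\Vor_F(p)=\log\Vor_{F'}(p)$ is not automatic but is Theorem \ref{partial-interior} applied to $F$; and (ii) for data points $u$ on the boundary of $\Delta_{n-1}$ the log-likelihood is not defined in the paper's sense and strict concavity (hence uniqueness of the maximizer) can fail, so you should run the argument for $u\in Q_p\cap\Delta_{n-1}^\circ$ and conclude for boundary $u$ by density and closedness of logarithmic Voronoi cells, exactly as the paper does when it passes from the point $w$ to the vertex $v_i$.
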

\begin{proof}
Let $u\in \log\Vor_\M(p)$. Since $F\subseteq \M$ and $p$ is in the interior of $F$, we have $$u\in \log\Vor_\M(p)\subseteq\log\Vor_{F'}(p)=\log\Vor_{F}(p)=Q_p\cup \overline{Q}_p.$$
If $u\notin Q_p$, then $u\in \overline{Q}_p\setminus Q_p$. In particular, $u\notin\log\Vor_{\M'}(p)$. But since $\log\Vor_{\M'}(p)\subseteq \log\Vor_\M(p)$ and $\log\Vor_\M(p)$ is convex, we also have $\conv\{u,\log\Vor_{\M'}(p)\}\subseteq\log\Vor_\M(p)$. But then by construction of $\overline{Q}_p$, the convex hull above will contain an interior point in $\M$. This is a contradiction, as logarithmic Voronoi polytopes at distinct points on the model cannot intersect; thus $u\in Q_p$.

For the other direction, note that the polytope $Q_p$ has two types of points: the points in the polytope $\log\Vor_{\M'}(p)$ and the points not in $\log\Vor_{\M'}(p)$. Since $\log\Vor_{\M'}(p)\subseteq \log\Vor_{\M}(p)$, it suffices to show that $w\in \log\Vor_{\M}(p)$ for each point $w\in Q_p$ of the second type. We show that $w\in \log\Vor_{\M}(p)$. Note that we may assume that $w$ in in the interior of $\Delta_{n-1}$, since taking the closure would preserve the containment.  Note that $\ell_w(x)$ is a strictly concave function on the simplex, so its super-level sets
$$C_{\alpha}=\{x\in\Delta_{n-1}:\ell_w(x)\geq \alpha\}$$
are convex $(n-1)$-dimensional sets. Since the maximum of $\ell_w(x)$ on $F'$ is achieved at $p$, we know that it is given by $\ell_w(p)=\max\{\alpha: C_\alpha\cap F\neq \varnothing\}$. Note that $F'$ divides the linear extension $\M'$ into two polytopes, $S_1$ and $S_2$, where $S_1$ is the polytope containing the model $\M$. If $w\notin \log\Vor_\M(p)$, then $\Phi_\M(w)=q\neq p$, where $q\notin F$. So, $q\in S_1$ lies on some other facet of $\M$. Moreover, $m\notin F'$, since $p$ is the maximizer over $F'$ and $\ell_w(m)>\ell_w(p)$.\\
\textbf{Case 1:} Suppose $m\in S_1$. Note that $R=\bigcup_{r\in F'}\log\Vor_{\M'}(r)$ is an $(n-2)$-dimensional hypersurface inside $\Delta_{n-1}$, obtained by intersecting a ruled hypersurface in $\RR^n$ with the simplex. Thus, $R$ subdivides the simplex into two full-dimensional parts. By construction, $w$ and $m$ are on different sides of $R$. Since logarithmic Voronoi cells are convex sets, the line $[w,m]\subseteq \log\Vor_{\M'}(m)$, and this line intersects $R$. This is a contradiction, since logarithmic Voronoi cells at two distinct points on the same model cannot intersect. \\
\textbf{Case 2:} If $m\in S_2$, then since $\ell_w(m)>\ell_w(p)$ and $\ell_w(q)>\ell_w(p)$, there exists some $\alpha$ such that $C_\alpha\subsetneq C_{\ell_w(p)}$ and such that $C_\alpha$ contains $q$ and $m$, but does not contain $p$. Since super-level sets are convex, the line segment $[q,m]$ between $q$ and $m$ is contained in $C_\alpha$. But since $q\in S_1$ and $m\in S_2$, the line $[q,m]$ intersects $F'$ in some point $s\neq p$. But then $\ell_w(s)>\ell_w(p)$, a contradiction.\\
We conclude that $w\in\log\Vor_{\M}(p)$. Since logarithmic Voronoi cells are closed sets, the closure of all such points $w$ is also contained in $\log\Vor_{\M}(p)$, and the conclusion follows.
\end{proof}

Now suppose $F$ is a face of $\M$ of dimension $d-k$ for some $k\geq 2$. Then $F$ is the intersection of at least $k$ faces of dimension $d-k+1$. Denote those faces by $\{G_1,\ldots, G_m\}$, where $m\geq k$. For each $i\in [m]$, $\log\Vor_{G_i'}(p)$ subdivides $\log\Vor_{F'}(p)$ into two polytopes. Exactly one of these polytopes will intersect the face $G_i$ at an interior point; call such polytope $\overline{Q}_i$. Call the other polytope $Q_i$. We present the following conjecture.

\begin{conj}\label{partial-boundary}
Let $p$ be a point in the relative interior of the face $F$ of $\M$. Then $\bigcap_{i\in [m]}Q_i=\log\Vor_{\M}(p)$. In particular, if $\M$ is in general position, $\dim \log\Vor_{\M}(p)=(n-1)-\dim F$.
\end{conj}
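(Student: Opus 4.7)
The plan is to apply Theorem \ref{partial-facets} once for each intermediate face $G_i$ and then stitch the resulting identities together via a polyhedral identity for normal cones at $F$.

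First I would view each $G_i$ as a partial linear model in its own right, with extension $G_i'$. Since $\dim G_i = \dim F + 1$, the face $F$ is a facet of $G_i$, and $p$ lies in its relative interior. The polytope $Q_i$ appearing in the conjecture is, by construction, the component of $\log\Vor_{F'}(p) \setminus \log\Vor_{G_i'}(p)$ not meeting the relative interior of $G_i$ -- exactly the polytope that Theorem \ref{partial-facets} associates with the pair $(G_i, F)$. Hence $Q_i = \log\Vor_{G_i}(p)$, and the first statement of the conjecture reduces to proving $\bigcap_{i=1}^m \log\Vor_{G_i}(p) = \log\Vor_\M(p)$.

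The inclusion $\log\Vor_\M(p) \subseteq \bigcap_i \log\Vor_{G_i}(p)$ is immediate: if $p$ maximizes $\ell_u$ on $\M$, then it does so on any convex subset containing $p$, in particular on $G_i \subseteq \M$. For the reverse inclusion, I would use strict concavity of $\ell_u$ to rewrite ``$p$ maximizes $\ell_u$ on $X$'' as the KKT condition $\nabla \ell_u(p) \in N_p X$, where $N_p X$ denotes the outer normal cone of $X$ at $p$. The key polyhedral identity is then $N_p \M = \bigcap_{i=1}^m N_p G_i$. To prove it, decompose $T_p \M = T_p F + C$, where the link cone $C$ has extreme rays $r_1, \dots, r_m$ in bijection with the faces $G_i$, each $r_i$ pointing from $F$ into $G_i$. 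Dualizing yields $N_p \M = T_p F^{\perp} \cap C^{\circ} = T_p F^{\perp} \cap \bigcap_i \{y : \langle y, r_i \rangle \leq 0\} = \bigcap_i N_p G_i$, so if $\nabla \ell_u(p) \in N_p G_i$ for every $i$, it lies in $N_p \M$, and $u \in \log\Vor_\M(p)$.

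For the dimension claim, the general-position hypothesis means $\M$ is simple at $F$, so $m = k$ and the hyperplanes $\log\Vor_{G_i'}(p)$ meet transversally inside $\log\Vor_{F'}(p)$, intersecting in $\log\Vor_{\M'}(p)$ of codimension $k$. The polytope $\bigcap_i Q_i = \log\Vor_\M(p)$ is then a full-dimensional chamber of this arrangement -- nonempty since it is a logarithmic Voronoi cell at a genuine point of the model -- so its dimension equals $\dim \log\Vor_{F'}(p) = (n-1) - \dim F$.

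The main obstacle I anticipate is the tangent cone decomposition $T_p \M = T_p F + C$ together with the bijection between the extreme rays of $C$ and the faces $G_i$; this is standard in polyhedral combinatorics, but one must argue it carefully in the non-general-position regime $m > k$, where the $r_i$ become linearly dependent modulo $T_p F$ (the identity $N_p \M = \bigcap_i N_p G_i$ still holds, but the dimension equality genuinely requires the general-position hypothesis). A secondary subtlety arises when $p$ lies on the boundary of $\Delta_{n-1}$, where the gradient $\nabla \ell_u(p)$ is no longer defined; one must then pass to interior sequences as in Section \ref{boundary}.
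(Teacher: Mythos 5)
This statement is labeled a conjecture in the paper, and no proof of it is given there, so there is no argument of the author's to compare yours against; I can only assess your proposal on its own terms.

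Your outline is a credible and, in places, genuinely illuminating attack. The identification $Q_i=\log\Vor_{G_i}(p)$ by applying Theorem \ref{partial-facets} to the pair $(G_i,F)$ is exactly right, and the normal-cone reduction is cleaner than the separation-style arguments the paper uses for Theorems \ref{partial-interior} and \ref{partial-facets}: for $p$ in the interior of $\Delta_{n-1}$, the equivalence $u\in\log\Vor_X(p)\iff\nabla\ell_u(p)\in N_pX$ is valid for any convex $X\subseteq\Delta_{n-1}$ containing $p$, and the identity $T_p\M=\mathrm{lin}(F-p)+\mathrm{cone}\{r_1,\dots,r_m\}=\sum_i T_pG_i$ (hence $N_p\M=\bigcap_i N_pG_i$ by polarity) is a standard polyhedral fact, \emph{provided} $G_1,\dots,G_m$ are \emph{all} the faces of $\M$ covering $F$ -- a hypothesis you use implicitly and should state, since the quotient cone $T_p\M/\mathrm{lin}(F-p)$ is generated by its extreme rays only if none are omitted. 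At interior points this would in fact prove the first claim of the conjecture directly, without routing through Theorem \ref{partial-facets} at all.

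There are, however, two genuine gaps. First, the boundary case is not a ``secondary subtlety'': if $F$ meets $\partial\Delta_{n-1}$, then $\nabla\ell_u(p)$ is undefined, the KKT equivalence breaks down, and Section \ref{boundary} of the paper shows that this is precisely the regime in which logarithmic Voronoi cells degenerate (Example \ref{moduli}); ``pass to interior sequences'' names the difficulty rather than resolving it, since one must control which vertices of $V_z(y^{(i)})$ collapse in the limit. Since the conjecture is stated for arbitrary $p$ in the relative interior of $F$, this case cannot be waved off. Second, for the dimension claim, nonemptiness of $\bigcap_i Q_i$ is not enough: this intersection always contains $\log\Vor_{\M'}(p)$, which has dimension $(n-1)-d<(n-1)-\dim F$, so you must actually produce a point in the relative interior of the chamber. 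This amounts to showing that the $k$ outward conormals of the halfspaces $Q_i$ are linearly independent modulo the lineality space of $N_pF'$, which is where the general-position hypothesis must be invoked concretely rather than invoked by name.
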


\begin{example}
Let $d=2$, $n=4$, and consider the model $\M$ defined as the convex hull of $\left(\frac{1}{5}, \frac{1}{5}, \frac{1}{5}, \frac{2}{5}\right), \left(\frac{1}{5}, \frac{1}{5}, \frac{2}{5}, \frac{1}{5}\right)$, and $\left(\frac{1}{4}, \frac{1}{4}, \frac{1}{4}, \frac{1}{4}\right)$. Below we plot the logarithmic Voronoi cells at interior points, edges, and vertices consecutively.
\end{example}

\begin{figure}[H]
    \centering
    \includegraphics[width=0.31\textwidth]{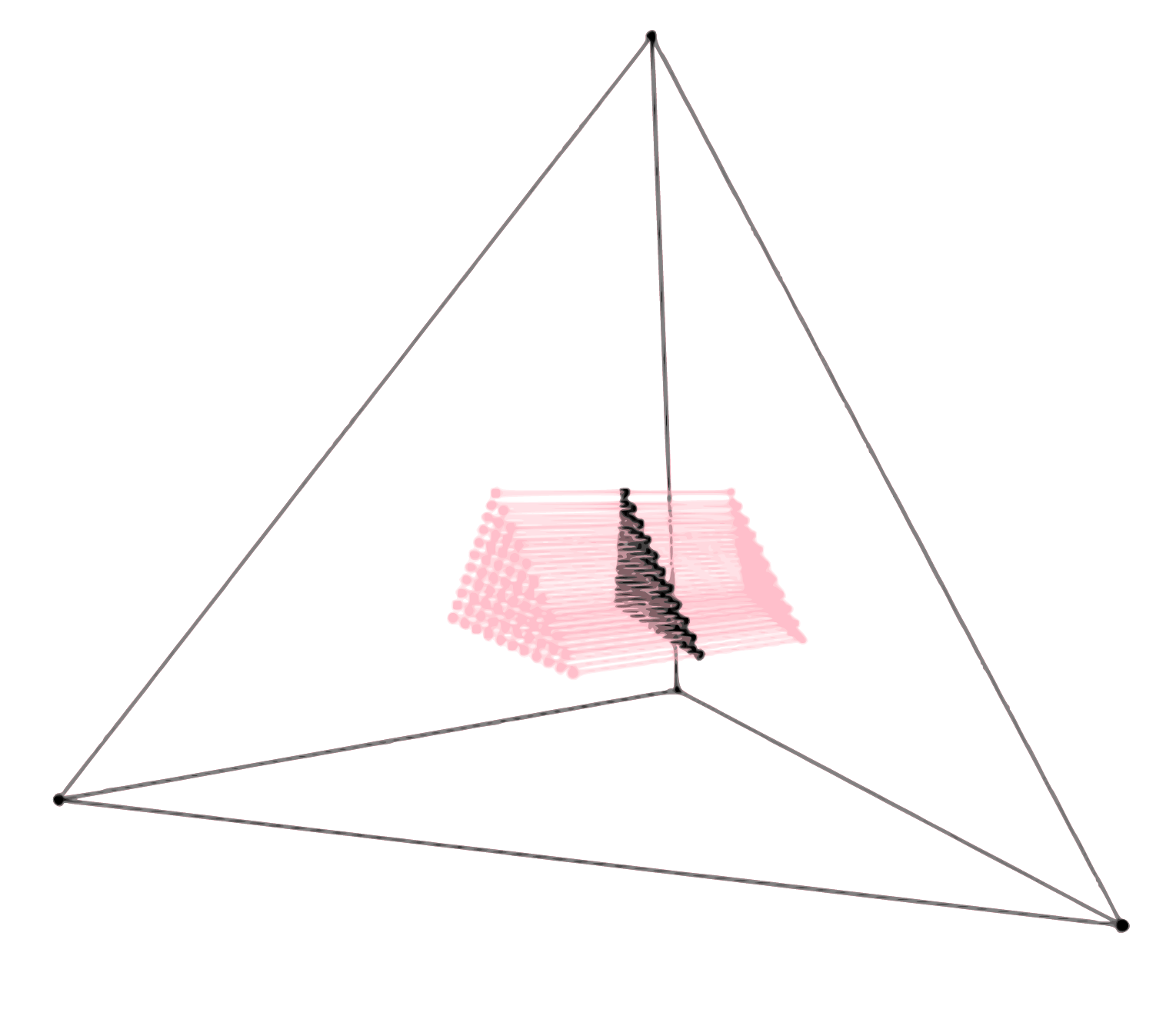}
    \includegraphics[width=0.33\textwidth]{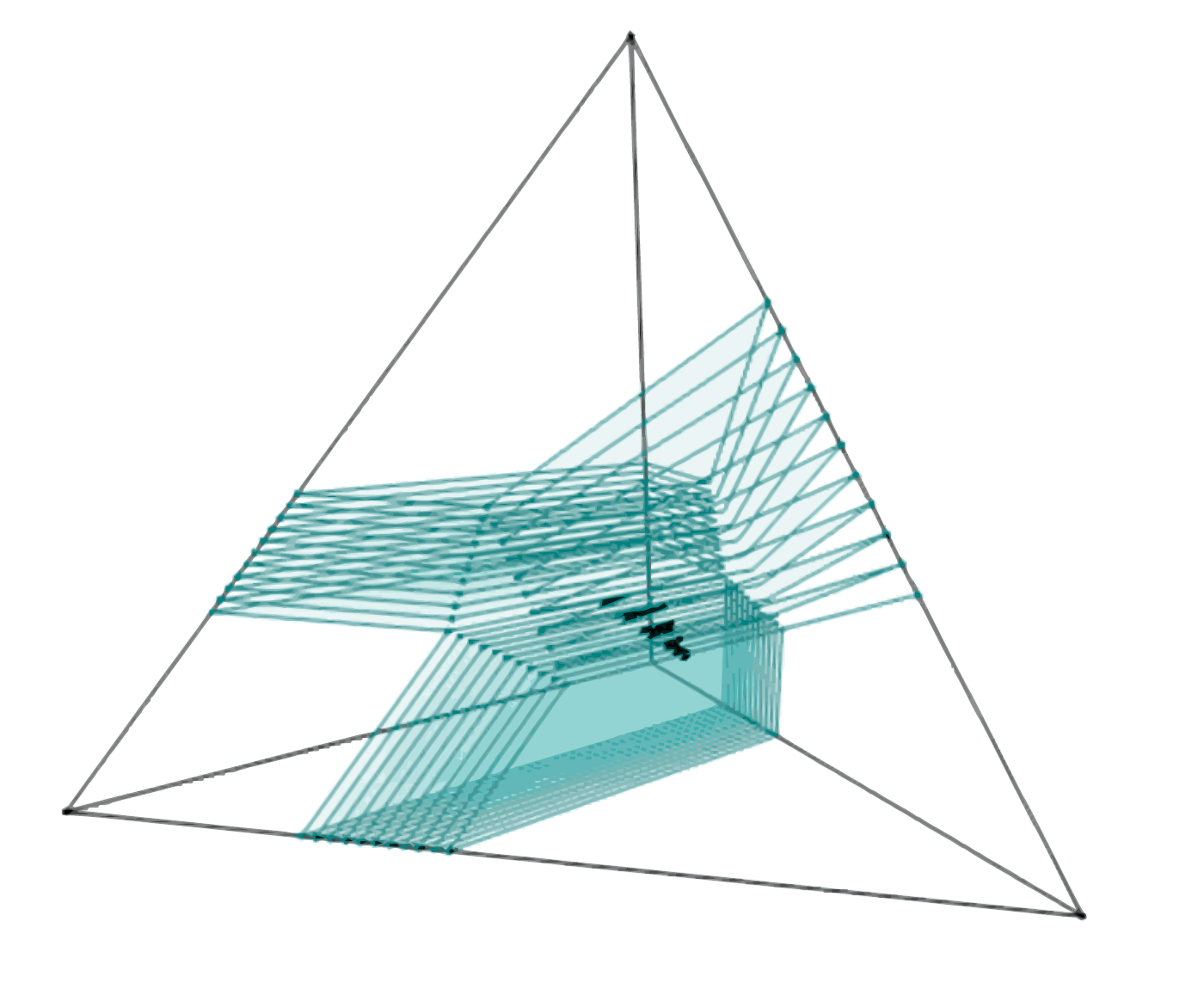}
    \includegraphics[width=0.32\textwidth]{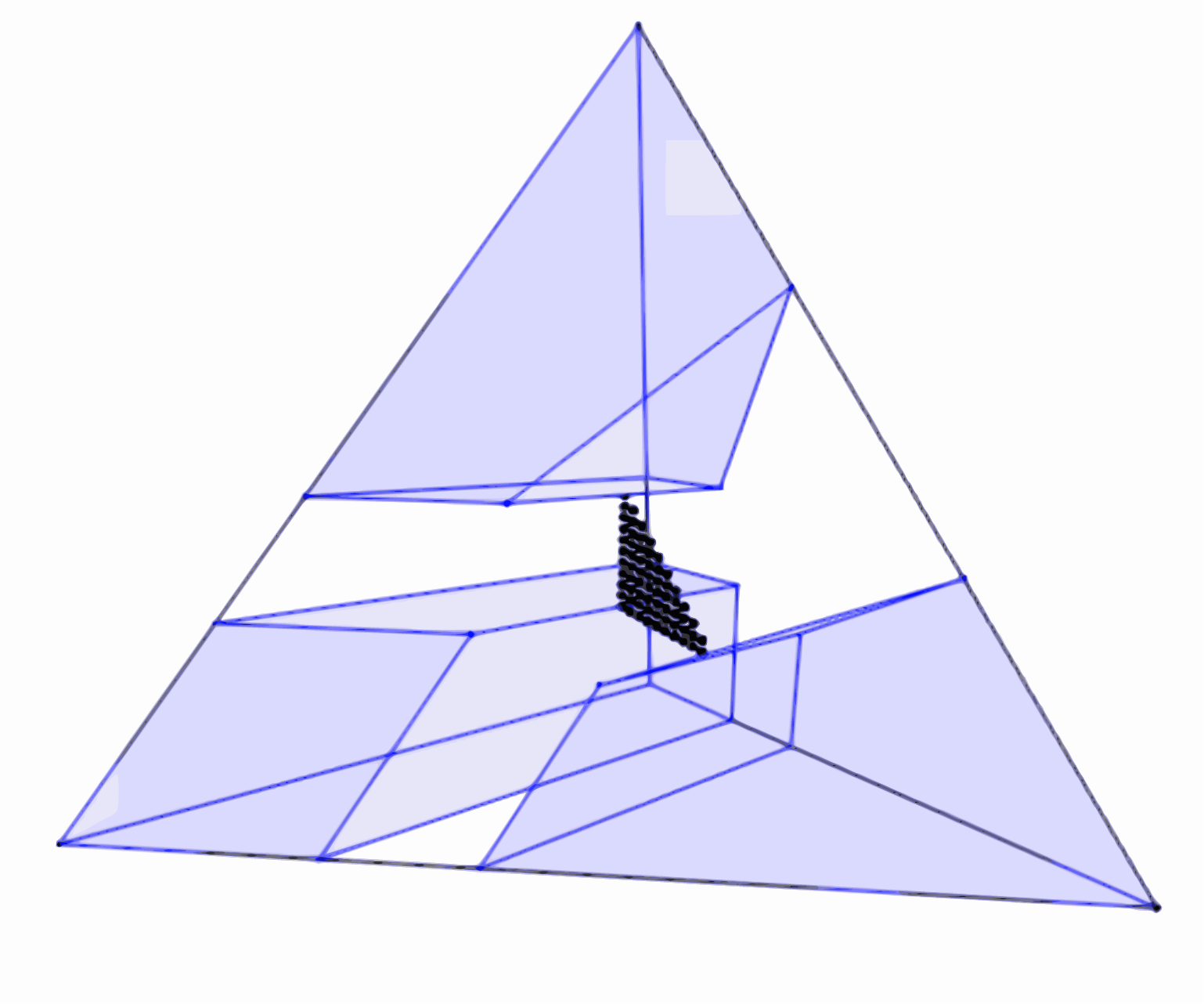} 
    \caption{Logarithmic Voronoi cells at sampled points on the model defined as the convex hull of three points.}
    \label{figure:partial-d2n4}
\end{figure}
\textbf{Acknowledgements}: We thank Serkan Ho\c{s}ten and Bernd Sturmfels for many helpful discussions, suggestions, and comments on the manuscript. We also thank Marie-Charlotte Brandenburg for discussions about polytopes and Thomas Endler for producing Figure~\ref{figure:tetrahedra-examples-d=1}. Finally, we thank the anonymous reviewers for their careful reading of the manuscript and their many insightful comments and suggestions. This material is based upon work supported by the National Science Foundation Graduate Research Fellowship under Grants No. DGE 1752814 and DGE~2146752. 
\bibliographystyle{plain}
\bibliography{references}
\end{document}